\documentclass[letter, 11pt]{article}
\usepackage[english]{babel}
\usepackage[utf8]{inputenc}

\usepackage[margin = 1.2 in, top = 1 in, bottom = 1.2 in]{geometry}
\makeatletter
\g@addto@macro\normalsize{%
  \setlength\abovedisplayskip{7pt}
  \setlength\belowdisplayskip{7pt}
  \setlength\abovedisplayshortskip{7pt}
  \setlength\belowdisplayshortskip{7pt}
}
\makeatother

\usepackage{graphicx}
\usepackage{cancel}

\usepackage{enumerate}
\usepackage{scalerel,stackengine}
\usepackage{graphics}
\usepackage{fancyhdr}
\usepackage{cancel}
\usepackage{enumerate}
\usepackage{amssymb}
\usepackage{amsmath}
\usepackage{hyperref}
\usepackage{mdwlist}
\usepackage{etoolbox}
\usepackage{latexsym}
\usepackage{amsthm}
\usepackage{multicol}
\usepackage{makeidx}
\usepackage{bm}
\usepackage{dsfont}
\usepackage{graphicx}
\usepackage{wrapfig}
\usepackage{multicol}
\usepackage{makeidx}
\usepackage{bm}
\usepackage{dsfont}
\usepackage{mdframed,color}
\usepackage{amsmath,amssymb}
\usepackage[dvipsnames]{xcolor}
\usepackage{mathtools}

\usepackage[capitalize]{cleveref}
\usepackage{bbm, dsfont}
\usepackage{tikz}
\usetikzlibrary{matrix}
\usetikzlibrary{positioning}

\usepackage{titlesec}
\titleformat{\section}
  {\large\center\bfseries}
  {\thesection.}{.7em}{}
\titlespacing*{\section}{0pt}{3.5ex plus 0ex minus 0ex}{1.5ex plus 0ex}
\titleformat{\subsection}
  {\center\bfseries}
  {\thesubsection.}{.7em}{}
\titlespacing*{\subsection}{0pt}{3.5ex plus 0ex minus 0ex}{1.5ex plus 0ex}
\titleformat{\subsubsection}
  {\center\bfseries}
  {\thesubsubsection.}{.7em}{}
\titlespacing*{\subsubsection}{0pt}{3.5ex plus 0ex minus 0ex}{1.5ex plus 0ex}

\addto\captionsenglish{}

\usepackage{titling}
\newtheorem{theorem}{Theorem}[section]	
\newtheorem{corollary}[theorem]{Corollary}

\newtheorem{lemma}[theorem]{Lemma}
\newtheorem{proposition}[theorem]{Proposition}

\newtheoremstyle{definition}{2mm}{2mm}{}{}{\bfseries}{.}{.5em}{}
\theoremstyle{definition}
\newtheorem{definition}[theorem]{Definition}
\newtheorem{remark}[theorem]{Remark}
\newtheorem*{remark*}{Remark}
\newtheorem{example}[theorem]{Example}
\theoremstyle{plain}
\newtheorem*{namedthm}{\namedthmname} 



\newcommand{\N}{\mathbb{N}}
\newcommand{\Z}{\mathbb{Z}}

\newcommand{\R}{\mathbb{R}}

\newcommand{\bS}{\mathbb{S}}

\newcommand{\T}{\mathbb{T}}
\newcommand{\norm}[1]{\left\lVert#1\right\rVert}
\newcommand{\ind}[1]{\mathbbm{1}_{#1}}

\providecommand{\norm}[1]{\lVert #1\rVert}

\usepackage[normalem]{ulem}

\newcommand{\E}{\operatorname{\mathbb{E}}}

\begin{document}
\author{By~~{\scshape Felipe~Hernández~}}
\date{\small \today}
\title{{\bfseries Structure and Spectrum of Nonergodic Nilsystems}}
\maketitle

\begin{abstract}
We study Host–Kra factors and the spectral type of nilsystems, without assuming ergodicity.
In the ergodic case, it is known that the spectral type splits into a discrete component and a Lebesgue component of infinite multiplicity.
Our main result extends this decomposition to nonergodic nilsystems.
\end{abstract}
\section{Introduction} 
 Let $G$ be a $k$-step nilpotent Lie group and $\Gamma$ a discrete cocompact subgroup of $G$. The compact manifold $X=G/\Gamma$ is called a \textit{$k$-step nilmanifold}. For an element $\tau \in G$ we denote by $T_\tau$ the transformation on $X$ given by left translation: $T_\tau(g\Gamma)=\tau g\Gamma$ for each $g\in G$. If it is clear from the context, we will denote such a transformation just by $T$. Each nilmanifold $X=G/\Gamma$ admits a unique \textit{Haar measure}, that is, a left-translation invariant Borel probability measure $\mu_X$. The pair $(X,T)$ is a topological dynamical system, while the triple $(X,\mu_X,T)$ is a measure-preserving system. When we refer to a \textit{nilsystem}, we regard both topological and measurable subjacent structures, and it will be clear from the context which one is meant. This is a common practice (see for example \cite[Chapter 11]{Host_Kra_nilpotent_structures_ergodic_theory:2018}). We will assume without loss of generality throughout this article, that for any nilsystem $(X=G/\Gamma,\mu_X,T_\tau)$ the group $G$ is generated by the connected component of the identity $G^\circ$ and $\tau$ (see \cref{section-2.1} for more details).

Nilsystems have been widely studied in the history of dynamical systems. Basic properties of nilsystems and their spectra were first studied by Auslander, Green, and Hahn \cite{Auslander_Green_Hahn63}, and criteria for ergodicity and minimality, as well as convergence of ergodic averages in nilsystems, can be found in the works of Parry \cite{Parry70}, Lesigne \cite{Lesigne_1991}, and Leibman \cite{Leibman05a,Leibman_pointwise_average_polynomial_nil_Zd:2005}, among many others. (We refer to \cite{Host_Kra_nilpotent_structures_ergodic_theory:2018} for a more complete exposition.) Moreover, in recent years the study of nilsystems in ergodic theory has gained even greater importance due to their connection to the structure theory for ergodic averages \cite{Host_Kra_nonconventional_averages_nilmanifolds:2005}. In particular, the discrete-spectrum factor and the Host--Kra $\mathcal{Z}_k$-factor of a system are topics of paramount importance. For background and notation, we refer the reader to \cref{sec-2.15}. Our first theorem gives an explicit algebraic description of the $\mathcal{Z}_k$-factors of nilsystems. We remark that this was well-known for ergodic nilsystems (see \cite[Cf. Chapters 11 and 13]{Host_Kra_nilpotent_structures_ergodic_theory:2018}). For nonergodic nilsystems, the only previously understood case is when $k=0$ and $X$ is connected, which is implicit in the work of Leibman (see \cite[Theorem 2.2]{Leibman03}).
\begin{theorem}\label{Theo-A}
Let $(X=G/\Gamma,\mu,T)$ be a nilsystem. There is a normal subgroup $H\subseteq  G$ such that $H$ is rational (i.e. $H\Gamma$ is closed) and the system $(G/H_{k+1}\Gamma, \mu_{G/H_{k+1}\Gamma},T) $ is the $\mathcal{Z}_k$-factor of $X$ for each $k\in \N_0$, where $H_{k+1}$ denotes the $(k+1)$-th term of the lower central series of $H$. In particular, the group $H$ satisfies $\overline{\{ T^nx: n\in \N\} }= Hx $ for $\mu$-a.e. $x\in X$. 
\end{theorem}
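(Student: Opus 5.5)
We will obtain $H$ from Leibman's description of orbit closures and then read off the $\mathcal{Z}_k$-factors fiberwise from the ergodic theory. Write $G$ as generated by $G^\circ$ and $\tau$. By Leibman (cf. \cite[Theorem 2.2]{Leibman03} and \cite{Leibman05a}), for every $x=g\Gamma$ the orbit closure $\overline{\{T^n x\}}$ is a sub-nilmanifold of the form $H_x x$, with $H_x$ a closed rational subgroup containing $\tau$. Moreover, for $x$ outside a countable union of proper sub-nilmanifolds (so $\mu$-a.e.), the conjugate $g^{-1}H_x g$ is one fixed rational subgroup. Our first step is to show that there is a single normal rational subgroup $H\subseteq G$ with $H_x=H$ for a.e. $x$.

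We plan to take $H$ to be the closed subgroup generated by $\tau$ and $[G,G]\cap H'$ for a suitable generic $H'$. Concretely, we would proceed by induction on the nilpotency step. Project to the maximal factor $G/[G^\circ,G^\circ]\Gamma$, which is a (possibly disconnected) compact abelian group rotation. There the orbit closure of a.e. point is a coset of a fixed closed subgroup. We then lift, using that the commutators $[\tau,G^\circ]$ act "vertically". Normality of $H$ should follow because $H_x$ is independent of $x$ on a conull $G$-invariant set: translating $x$ by $g$ conjugates $H_x$ by $g$, which forces $gHg^{-1}=H$ for $g$ in a dense set, and hence for all $g$ since $H$ is closed. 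Rationality is inherited from Leibman's result.

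Next, disintegrate $\mu$ into the ergodic components. These are the Haar measures $\mu_{Hx}$ on the sub-nilmanifolds $Hx\cong H/(H\cap g\Gamma g^{-1})$, and on each of them $T$ is ergodic. In the ergodic case, \cite[Chapters 11 and 13]{Host_Kra_nilpotent_structures_ergodic_theory:2018} gives that the $\mathcal{Z}_k$-factor of $(Hx,\mu_{Hx},T)$ is the quotient by $H_{k+1}$, where $H$ is generated by $H^\circ$ and $\tau$. Since $H$ is normal, $H_{k+1}$ is normal in $G$, so $G/H_{k+1}\Gamma$ is a nilmanifold. The map $X\to G/H_{k+1}\Gamma$ then restricts on each fiber $Hx$ to the ergodic $\mathcal{Z}_k$-projection. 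We need $H_{k+1}\Gamma$ closed; we would deduce this from the rationality of $H$, since lower central series terms of rational subgroups are rational.

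Finally, we use the known fact that the $\mathcal{Z}_k$-factor of a nonergodic system is measurable with respect to the ergodic decomposition and restricts to the $\mathcal{Z}_k$-factors of the ergodic components. Equivalently, the seminorms satisfy $\|f\|_{U^{k+1}(\mu)}^{2^{k+1}}\ge \int \|f\|_{U^{k+1}(\mu_{Hx})}^{2^{k+1}}\,d\mu(x)$, with the appropriate characterization of $\mathcal{Z}_k$ via $\mathcal{Z}_k$ of the components. This gives that $L^2(G/H_{k+1}\Gamma)$ is exactly $L^2(\mathcal{Z}_k)$. For $k=0$ this also confirms the final claim $\overline{\{T^nx\}}=Hx$.

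The main obstacle is the first step: producing a single normal subgroup $H$, rather than a point-dependent family of conjugates $g^{-1}H'g$. This is also where the nonergodic structure, in particular the disconnectedness and the $\tau$-component, must be handled carefully. I would try first to define $H=\overline{\langle \tau, [G,G]\cap \bigcap_x H_x\rangle}$ and verify with Leibman's criterion that $Hx$ is the orbit closure a.e.
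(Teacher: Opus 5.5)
\textbf{There is a genuine gap: your first step, producing a single normal rational $H$, is not carried out, and this is the actual content of the theorem.}

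Your second half matches what the paper does once $H$ is available. That is: ergodic components are Haar measures on the fibers $Hx$; the ergodic Host--Kra result identifies $\mathcal{Z}_k$ as the quotient by $H_{k+1}$; and the Chu--Frantzikinakis--Host criterion lets $\mathcal{Z}_k$-measurability be checked on almost every ergodic component.

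The problems with the first step are these.
\begin{enumerate}
\item Your normality argument is circular. It presupposes that $H_x$ is independent of $x$ on a conull set, which is exactly what step one must show.
\item It is also incorrect as stated. $G$ acts transitively on $X$, so the only nonempty $G$-invariant set is $X$ itself. Non-generic orbit closures show the conclusion fails off a null set, e.g.\ the finite orbits with rational first coordinate for $T(x,y)=(x,x+y)$.
\item Translating by $g$ does not conjugate the $\tau$-orbit closure. Since $\tau^n gx=g(g^{-1}\tau g)^n x$, you get an orbit of the conjugated transformation. Also, $H_x$ is not determined by the set $H_x x$, so ``independent of $x$'' is not well posed without a normalization.
\item Leibman's theorem only gives $\overline{\mathrm{Orb}}(g\Gamma)=gH'\Gamma$ for one fixed $H'$, i.e.\ the point-dependent groups $gH'g^{-1}$. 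To get a single $H$ you must prove that a suitable enlargement of $H'$ is normal.
\item Your explicit candidate $\overline{\langle\tau,[G,G]\cap H'\rangle}$ fails already for an irrational circle rotation. There $[G,G]$ is trivial, the candidate is the discrete group $\langle\tau\rangle$, and $\langle\tau\rangle\Gamma$ is not closed, whereas the correct $H$ is all of $G$. Intersecting over all $x$ only makes this worse.
\end{enumerate}

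The paper supplies the missing ideas as follows.
\begin{enumerate}
\item It applies Leibman's generic-orbit theorem on the connected component $X_0=G^\circ\Gamma$ to $\tau^m$, with $m$ minimal such that $\tau^mX_0=X_0$. This is how disconnectedness is handled.
\item After a change of base point, density of the generic set shows that $H'\Gamma$ is invariant under every conjugate $g^{-1}\tau^m g$. One then replaces $H'$ by $\langle\tau^m,(H')^\circ\rangle$, which contains all of them.
\item Leibman's Theorem 3.5 (connected components of the generic orbit are normal subnilmanifolds) gives that $H^\circ$ is normal in $G^\circ$. This is the input that collapses the family of conjugates to one group, and nothing in your plan replaces it.
\item A polynomial argument upgrades statements about $\tau^{mn}$ to all $\tau^n$. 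Here $\mathrm{Ad}_{\tau^n}$ is polynomial in $n$ because $\mathrm{Ad}_\tau$ is unipotent, and $\log[g,\tau^n]$ is polynomial in $n$ by Baker--Campbell--Hausdorff. This yields $\tau H^\circ\tau^{-1}=H^\circ$ and $[\tau,G^\circ]\subseteq H^\circ$.
\item Consequently $H=\langle H^\circ,\tau\rangle$ is normal and rational, and $\overline{\mathrm{Orb}}_T(g\Gamma)=gH\Gamma=Hg\Gamma$ for almost every $g\in G^\circ$.
\end{enumerate}

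Finally, your remark that the case $k=0$ ``confirms'' the orbit-closure statement is backwards. That statement is an input, used to identify the ergodic components as Haar measures on $Hx$.
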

We remark that \cref{Theo-A} relies on the known results in the ergodic case and uses a special case
\cite[Theorem 2.2]{Leibman03} in its proof.

For instance, if $(X=G/\Gamma,\mu,T)$ is an ergodic nilsystem, then $H=G$ and the $\mathcal{Z}_k$-factor is $(G/G_{k+1}\Gamma,\ \mu_{G/G_{k+1}\Gamma},\ T).$
Although in the ergodic setting the $\mathcal{Z}_1$-factor coincides with the discrete-spectrum factor,
this does not necessarily hold in the nonergodic case (see \cref{example-2.1}). Thus, to identify the
discrete-spectrum factor for a nonergodic nilsystem $(X=G/\Gamma,\mu,T)$, we need to study its spectrum. The necessary background from spectral theory is recalled in \cref{sec-2.2}.

The spectral analysis of ergodic transformations on nilmanifolds begins with the work of L.\ Green
\cite{Auslander_Green_Hahn63}, who showed that in an ergodic nilsystem $(X=G/\Gamma,\mu_X, T)$ where $G$ is connected and simply connected, the spectrum splits into a discrete component and an infinite Lebesgue component. This decomposition was established by Host, Kra, and Maass in the case of $2$-step nilsystems in \cite{HKM}; and by Ackelsberg, Richter, and Shalom without any further hypotheses beyond ergodicity in \cite[Theorem 1.4]{MR4797105}. Our
next theorem shows that ergodicity can be dropped as well. For a subgroup $H\leq G$, we write
\[
\mathcal{J}(H,\Gamma):=\bigl(\overline{\langle H,\Gamma\rangle}\bigr)^{\circ},
\]
that is, the identity component of the closure of the subgroup generated by $H$ and $\Gamma$. This
notation was introduced by Auslander \cite{Auslander-63} and later used by Ghorbel and Loksaier
\cite{GK-19} in their study of the $\Gamma$-rational closure of connected subgroups of $G$.

\begin{theorem}\label{Theo-B}
Let $(X=G/\Gamma,\mu,T_{\tau})$ be a nilsystem. Then the Koopman representation of $T_{\tau}$ on $L^2(X,\mu)$ decomposes as
  $$ L^2(X)=L^2(\mathcal{J}([\tau,G],\Gamma)\backslash X)\oplus L^2(\mathcal{J}([\tau,G],\Gamma)\backslash X)^\perp,$$
  where $T_{\tau}$ exhibits discrete spectrum on $L^2(\mathcal{J}([\tau,G],\Gamma)\backslash X)$ and infinite Lebesgue spectrum on $L^2(\mathcal{J}([\tau,G],\Gamma)\backslash X)^\perp$ if nontrivial. In particular, $G/\mathcal{J}([\tau,G],\Gamma)\Gamma$ is the discrete-spectrum factor of $X$.
\end{theorem}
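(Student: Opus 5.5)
Write $J:=\mathcal{J}([\tau,G],\Gamma)$. The plan has two halves: first show that $T_\tau$ has discrete spectrum on the $J$-invariant functions, and then show that the orthogonal complement carries Lebesgue spectrum of infinite multiplicity.

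\emph{Structure of $J$.} Since $G$ is generated by $G^\circ$ and $\tau$, the subgroup $[\tau,G]$ is normal, and conjugation by $\Gamma$ preserves $\langle [\tau,G],\Gamma\rangle$. Hence $J$ is a closed connected subgroup, normalized by $\tau$ and by $\Gamma$, and $J\Gamma$ is closed. I will check that $J$ is normal in $G$, using that $G^\circ$ normalizes $J$ modulo $[G,G]$-type commutators lying in $[\tau,G]\subseteq J$. Granting this, $Y:=G/J\Gamma$ is a nilmanifold and $L^2(J\backslash X)$ is identified with $L^2(Y)$.

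\emph{Discrete spectrum on $L^2(J\backslash X)$.} Because $[\tau,G]\subseteq J$, the image of $\tau$ in $G/J$ is central. Translation by a central element $\bar\tau$ commutes with the action of $G$ on $Y$. I would then decompose $L^2(Y)$ under the compact abelian group $\overline{\langle \bar\tau\rangle}$, or equivalently under $Z(G/J)$ acting modulo its stabilizer. This is legitimate because the orbit closures $\overline{\{\bar\tau^n y\}}$ are all translates of a single compact abelian group, since $\bar\tau$ is central. Each character component is an eigenspace, so $T_\tau$ has discrete spectrum on $L^2(Y)$.

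\emph{Lebesgue spectrum on the complement.} The key reduction is to disintegrate $\mu$ over the invariant factor, $X\to G/H\Gamma$ with $H$ as in \cref{Theo-A}. The ergodic components are the orbit closures $Hx$, and these are sub-nilsystems $x\mapsto Hx\cong H/(H\cap x\Gamma x^{-1})$ on which translation by $\tau$ (a conjugate of it) is ergodic. On each ergodic component I apply \cite[Theorem 1.4]{MR4797105}: $L^2$ of the component splits into the Kronecker part plus an infinite Lebesgue part, and the Kronecker factor is the quotient by $(H_2)$-type commutators, namely $[H,H]$ rationally closed in the fiber.

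I then need the identification that, fiberwise, the Kronecker factor coincides with the restriction of the $J$-quotient. In other words, the closure of $[\tau,G]$ together with $\Gamma$, taken to the identity component, restricts on almost every component $Hx$ to the rational closure of $[H,H]$ there. This uses $H\supseteq$ the relevant group generated by $\tau$ and commutators, together with $\overline{\langle [\tau,G],\Gamma\rangle}$ being exactly what makes $\tau$ act by rotations. With this identification, $L^2(J\backslash X)^\perp$ is the direct integral over components of the Lebesgue parts. A direct integral of representations each of which is countable-multiplicity Lebesgue is again Lebesgue of infinite multiplicity: its maximal spectral type is absolutely continuous and equivalent to Lebesgue measure, and its multiplicity is infinite almost everywhere. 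The final statement, that $Y$ is the discrete-spectrum factor, then follows, since no nonzero function in the complement can have an atom in its spectral measure.

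\emph{Main obstacle.} I expect the hardest step to be the fiberwise identification of $J$ with the ergodic-component Kronecker subgroups. Rationality, via the rational closure $\mathcal{J}$, interacts nontrivially with conjugating $\Gamma$ by $x$. My first attempt would be the following. Show that $\mathcal{J}([\tau,G],\Gamma)\cap H$ and $\mathcal{J}([H,H],x\Gamma x^{-1})$ have the same Lie algebra for generic $x$. To do this I would use the results of \cite{GK-19} on $\Gamma$-rational closures of connected subgroups. I would also use that $[\tau,G]$ and $[H,H]$ generate the same rational subspace modulo $G_3$, by an induction on the nilpotency step.
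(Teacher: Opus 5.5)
Your handling of $L^2(J\backslash X)$ matches the paper: you pass to $G/J$, where $\tau$ becomes central, and split into character spaces of the center. Your fiberwise use of \cite[Theorem 1.4]{MR4797105} is essentially how the paper treats $L^2([H,H]\backslash X)^\perp$. The gap is the step you flag as the main obstacle. The fiberwise identification of $J$ with the Kronecker subgroups of the ergodic components is not just hard, it is false. So is the conclusion that $L^2(J\backslash X)^\perp$ is the direct integral of the fiberwise Lebesgue parts.

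Take \cref{example-2.1}, $T(x,y)=(x,x+y)$ on $\T^2$, realized as $G=\langle\tau\rangle\ltimes\R^2$, $\Gamma=\langle\tau\rangle\ltimes\Z^2$, with $\tau$ acting on $\R^2$ by $(x,y)\mapsto(x,x+y)$. Then $[\tau,G]=\{0\}\times\R=J$, whereas $H=\langle \tau,\{0\}\times\R\rangle$ is abelian, so $[H,H]$ is trivial. Almost every ergodic component $\{x\}\times\T$ is an irrational rotation with pure discrete spectrum, so every fiberwise Lebesgue part is $\{0\}$. Yet $e(qy)$ with $q\neq 0$ lies in $L^2(J\backslash X)^\perp$ and has Lebesgue spectral measure.

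Both auxiliary claims you propose fail in this example. The Lie algebras of $J\cap H$ and $\mathcal{J}([H,H],x\Gamma x^{-1})$ have dimensions $1$ and $0$. The groups $[\tau,G]$ and $[H,H]$ span different rational subspaces. In general one only has $[H,H]\subseteq J$, so your argument reaches only $L^2([H,H]\backslash X)^\perp$. This is precisely the nonergodic phenomenon that the $\mathcal{Z}_1$-factor and the discrete-spectrum factor differ.

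What is missing is an argument for the middle piece $L^2([H,H]\backslash X)\ominus L^2(J\backslash X)$. There each fiber has discrete spectrum, but the eigenvalues move with the fiber, and the Lebesgue spectrum comes from integrating these moving atoms over the invariant factor $Y=G/H\Gamma$. The paper handles it in four steps:
\begin{enumerate}
\item[(i)] It passes to $[H,H]\backslash X$, a system of order one. It then uses a Frantzikinakis--Host relative orthonormal basis of eigenfunctions $\phi_j$ with measurable eigenvalues $\ind{A_j}(y)\,\chi_j(g^{-1}\tau g)$ (\cref{tj-explicit}, \cref{relative-basis}). This makes the spectral measure of $\psi\phi_j$ the pushforward of $|\psi\phi_j|^2\ind{A_j}\mu_Y$ under $y\mapsto e(t_j(y))$.
\item[(ii)] It writes $e(t_j(y))=\chi_j(\tau)\,e(p_j(x))$ in Mal'cev coordinates, with $p_j$ a polynomial. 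This polynomial is nonconstant whenever $\phi_j$ is not $J$-invariant, by minimality of $J$ among rational connected normal subgroups containing $[\tau,G]$.
\item[(iii)] It applies the coarea lemma \cref{app-coarea-formula} to get absolute continuity.
\item[(iv)] It proves infinite multiplicity separately, by reducing to a factor where $[\tau,G]$ is central and applying the translation trick of \cref{Lemma-measure-in-circle} on each character space.
\end{enumerate}
Without some substitute for (i)--(iii), your proposal does not give Lebesgue spectrum on all of $L^2(J\backslash X)^\perp$.
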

If $(X=G/\Gamma,\mu,T_\tau)$ is ergodic, then $\mathcal{J}([\tau,G],\Gamma)=[G,G]$ (see \cref{theo-B-erg}). This recovers the fact that the Kronecker factor of $(X=G/\Gamma,\mu,T)$ is $G/G_2\Gamma$, as well as the
result of Ackelsberg, Richter, and Shalom. However, we emphasize that our approach relies on these results. In the general (nonergodic) case, it is unclear to the author whether
$\mathcal{J}([\tau,G],\Gamma)=[H,G]$, where $H$ is the group given by \cref{Theo-A}. 

We obtain the following immediate corollary from \cref{Theo-B}.
\begin{corollary}\label{imm-coro}
    Let $(X=G/\Gamma,\mu,T)$ be a nilsystem. Let $f\in L^\infty(X)$ and suppose that $f$ is orthogonal to the subspace spanned by all eigenfunctions of $T$.  Then
    $$\lim_{n\to \infty} \int_X \overline{f}\cdot T^n f d\mu =0. $$
\end{corollary}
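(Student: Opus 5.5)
We derive the corollary directly from \cref{Theo-B}. Set $V:=L^2(\mathcal{J}([\tau,G],\Gamma)\backslash X)$. By \cref{Theo-B}, $T$ has discrete spectrum on $V$, so $V$ is the closed linear span of eigenfunctions of $T$ that lie in $V$. The plan is to show that $V$ contains \emph{every} eigenfunction of $T$, so that the closed span of all eigenfunctions is exactly $V$. Then any $f$ orthogonal to all eigenfunctions lies in $V^\perp$, where the spectrum is Lebesgue, and the decay of correlations follows from the Riemann--Lebesgue lemma.

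To see that every eigenfunction lies in $V$, let $Tg=\lambda g$ and write $g=g_1+g_2$ with $g_1\in V$ and $g_2\in V^\perp$. Both $V$ and $V^\perp$ are $T$-invariant (the decomposition in \cref{Theo-B} is one of the Koopman representation), so $Tg_2=\lambda g_2$. The maximal spectral type of $T$ restricted to $V^\perp$ is absolutely continuous with respect to Lebesgue measure, so it has no atoms. If $g_2\neq 0$, however, its spectral measure would be $\|g_2\|^2\delta_\lambda$, an atom. Hence $g_2=0$ and $g\in V$. (This is also what the "in particular" clause of \cref{Theo-B} asserts.)

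Now take $f\in L^\infty(X)\subseteq L^2(X)$ orthogonal to all eigenfunctions. By the previous step $f\perp V$, so $f\in V^\perp$. If $V^\perp$ is trivial there is nothing to prove. Otherwise let $\sigma_f$ be the spectral measure of $f$ on the circle $\mathbb{T}$. It is absolutely continuous with respect to Lebesgue measure, so $\sigma_f=h\,dm$ with $h\in L^1(\mathbb{T})$. Then
\[
\int_X \overline{f}\cdot T^n f\, d\mu=\langle T^n f,f\rangle=\widehat{\sigma_f}(n)=\int_{\mathbb{T}} z^n h(z)\,dm(z)\longrightarrow 0
\]
by the Riemann--Lebesgue lemma.

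There is no genuine obstacle here. The only point needing care is the identification of the closed span of all eigenfunctions with $V$, which uses the $T$-invariance of the two summands and the fact that a Lebesgue spectral type has no atoms. Note also that boundedness of $f$ is not needed; the argument works for every $f\in L^2(X)$.
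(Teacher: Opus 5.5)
Your proof is correct and follows the route the paper intends: the paper omits the proof as identical to that of \cite[Theorem 1.7]{MR4797105}, i.e.\ \cref{Theo-B} places $f$ in $L^2(\mathcal{J}([\tau,G],\Gamma)\backslash X)^\perp$, where $\sigma_f\ll m_{\bS^1}$, and the Riemann--Lebesgue lemma gives the decay. To conclude $f\perp L^2(\mathcal{J}([\tau,G],\Gamma)\backslash X)$ you only need that this space is the closed span of the eigenfunctions it contains, so your check that every eigenfunction of $T$ lies in it is correct but not needed.
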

\cref{imm-coro} was known in the ergodic case previously by several sources \cite{Griesmer2,FranKuca,MR4797105}. In particular, its proof is identical to the one of  \cite[Theorem 1.7]{MR4797105} for the ergodic case, so we will omit it. 
\subsection*{Acknowledgements}
The author is grateful to Florian Richter for his guidance during the preparation of this article. We also thank Ethan Ackelsberg for fruitful discussion around the topic. Finally, we thank Bryna Kra for numerous suggestions on an early draft of this article and Or Shalom for pointing out \cref{imm-coro}.

\section{Preliminaries}
\subsection{Background from nilsystems}\label{section-2.1}
For a Lie group $G$ and $a,b\in G$, the \textit{commutator} of $a$ and $b$ is $[a,b]=aba^{-1}b^{-1}$. For subsets $A,B\subseteq G$, we write $[A,B]$ for the group generated by $\{[a,b]: a\in A, b\in B\}$. Whenever $A$ has only one element $a$, we will make the abuse of notation $[a,B]:=[A,B]$. For a nilpotent Lie group $G$, we denote its commutator subgroups as $G_1=G$ and $G_l=\left[G_{l-1}, G\right]$ for $l \geq 2$. Let $\Gamma$ be a discrete cocompact subgroup of $G$. A subgroup $H\subseteq G$ is called $\Gamma$-\textit{rational} (or simply \textit{rational} if $\Gamma$ is understood from the context) if $H\Gamma$ is closed. Notice that if $H$ is rational, then $H$ is closed (see \cite[Lemma 14 Chapter 10]{Host_Kra_nilpotent_structures_ergodic_theory:2018}). We denote by $G^{\circ}$ the connected component of the identity $e_G$ in $G$, and we denote by $m_{G^{\circ}}$ a Haar measure therein. Then, $G^{\circ}$ is an open normal subgroup of $G$ (see \cite[Section 4.1]{Host_Kra_Maass_variations_top_recurrence:2016}). By \cite[Chapter 10, Lemma 7 and Theorem 13]{Host_Kra_nilpotent_structures_ergodic_theory:2018}, we will assume without loss of generality that $G$ is simply connected (meaning that $G^{\circ}$ is simply connected) and that if $\tau\in G$ is a fixed element such that $G=\langle G^\circ,\tau\rangle$, then $\tau$ spans a free abelian group with trivial intersection with $G^\circ$.

For nilsystems, topological properties like transitivity, minimality, and unique ergodicity are equivalent to the measurable property of ergodicity (see {\cite{Parry_ergodic_affine_nil:1969,Lesigne_1991,Leibman05a,Host_Kra_nilpotent_structures_ergodic_theory:2018}}). Also, if $(X=G/\Gamma,\mu_X,T_\tau)$ is a nilsystem, we will make the standard assumption that $G$ is generated by $G^\circ$ and $\tau$, as $X$ can be decomposed into finitely many isomorphic spaces with this property (see \cite[Chapter 11, Section 1.2]{Host_Kra_nilpotent_structures_ergodic_theory:2018}). This, in particular, implies that the group $[\tau,G]=[\tau,G^\circ]$ is normal. Indeed, as $G^\circ$ is normal in $G$ and $G$ is generated by $G^\circ$ and $\tau$, it is enough to see that $[\tau,G^\circ]$ is normalized by $G^\circ$. Let $g,h\in G^\circ$. We have that $h[\tau,g]h^{-1}=[\tau,h]^{-1}[\tau,hg]\in [\tau,G^\circ], $ concluding that $[\tau,G^\circ]$ is normal in $G$. By \cite[Theorem 3.5]{GK-19}, this implies that $\mathcal{J}([\tau,G],\Gamma)$ is the smallest rational connected normal subgroup of $G^\circ$ that contains $[\tau,G]$. Also, by \cite[Proposition 3.4]{GK-19} we get that $\mathcal{J}([\tau,G],\Gamma)= (\overline{[\tau,G]\Gamma})^{\circ}$ and $\mathcal{J}([\tau,G],\Gamma)\Gamma=\overline{[\tau,G]\Gamma}$. As $\mathcal{J}([\tau,G],\Gamma)$ is trivially normalized by $\tau$, the group $\mathcal{J}([\tau,G],\Gamma)$ is normal in $G$.

A subnilmanifold $Y$ of $X$ is a set of the form $Y=Hx$ where $x\in X$ and $H\subseteq G$ is a rational subgroup of $G$. In this case, the subnilmanifold $Y$ is isomorphic to $H/(H\cap (g\Gamma g^{-1}))$ where $g\in G$ is such that $x=g\Gamma$. Consider now a nilsystem $(X=G/\Gamma,\mu_X,T)$. For a point $x\in X$, we denote its closed orbit under the transformation $T$ by 
$\overline{\mathrm{Orb}}_T(x):=\overline{ \{T^n x : n\in \N \}}. $
It is a known fact that such an orbit is a subnilmanifold of $X$ (see \cite[Chapter 11]{Host_Kra_nilpotent_structures_ergodic_theory:2018} for example). A subnilmanifold $Y$ of $X=G/\Gamma$ is called \textit{normal} if $Y=Hx$ where $x\in X$ and $H$ is a normal subgroup of $G^{\circ}$.

\subsection{Background from structure theory}\label{sec-2.15}

Let $(X,\mu,T)$ be a measure-preserving system. We will define the uniformity seminorms, which were introduced in the ergodic case in \cite{Host_Kra_nonconventional_averages_nilmanifolds:2005} and then in the nonergodic case in \cite{Chu_Frantzikinakis_Host11}. Let $f\in L^\infty(X)$. For $s=0$, we define
$$\norm{f}_{U^0(X,T,\mu)}:=\int_X f d\mu.$$
For $s\geq 0$, we define 
\begin{equation}\label{eq-US}
   \norm{f}_{U^{s+1}(X,\mu,T)}:=\lim_{H\to \infty} \left( \frac{1}{H}\sum_{h=0}^{H-1} \norm{T^h f \cdot \overline{f}}_{U^{s}(X,\mu,T)}^{2^s} \right)^{1/2^{s+1}}. 
\end{equation}
In \cite{Host_Kra_nonconventional_averages_nilmanifolds:2005} it is proved that the limit in \cref{eq-US} always exists and that for $s\geq 1$, $\norm{\cdot}_{U^s(X,\mu,T)}$ defines a seminorm on $L^\infty(\mu)$. The \textit{$\mathcal{Z}_s$-factor of} $(X,\mu,T)$ is the factor $(\mathcal{Z}_s,m_{\mathcal{Z}_s},T)$ such that for any $f\in L^\infty(X)$ we have 
$$\norm{f}_{U^{s+1}(X,\mu,T)}=0 \iff \E(f |\mathcal{Z}_s)=0. $$
If in addition $(X,\mu,T)$ is ergodic, then the main result in \cite{Host_Kra_nonconventional_averages_nilmanifolds:2005} states that $(\mathcal{Z}_s,m_{\mathcal{Z}_s},T)$ is the maximal factor of $(X,\mu,T)$ that is isomorphic to an inverse limit of $s$-step nilsystems. We recall that for a measure-preserving system $(X,\mu,T)$, the \textit{ergodic decomposition} of $\mu$ is the desintegration $(\mu_x)_{x\in X}$ of $\mu$ with respect the sigma-algebra of $T$-invariant functions $\mathcal{I}(T)$. Let $(X,\mu,T)$ be a measure-preserving system and $k\in \N_0$ be a nonnegative integer. Let $(\mu_x)_{x\in X}$ be the ergodic decomposition of $\mu$. Then by \cite[Corollary 3.3]{Chu_Frantzikinakis_Host11}, for $f\in L^\infty(\mu)$ we have that 
    $$\E_\mu(f \mid \mathcal{Z}_{k,\mu})=0 \iff \E_{\mu_x}(f \mid \mathcal{Z}_{k,\mu_x})=0 \text{ for } \mu\text{-almost every } x\in X.$$
    Furthermore
    \begin{equation}\label{prop-CFH}
        f\in L^\infty(\mathcal{Z}_{k,\mu},\mu) \iff f\in L^\infty(\mathcal{Z}_{k,\mu_x},\mu_x) \text{ for }\mu-\text{almost every } x\in X.
    \end{equation}
We will call a measure-preserving system $(Y,\nu,T)$ the \textit{discrete-spectrum factor} of $(X,\mu,T)$, if $(Y,\nu,T)$ is the maximal factor of $(X,\mu,T)$ exhibiting discrete spectrum. This means that if $(Z,\eta,T)$ is another factor of $(X,\mu,T)$ exhibiting discrete spectrum, then $(Z,\eta,T)$ is also a factor of $(Y,\nu,T)$.

\subsection{Background from spectral theory}\label{sec-2.2}

We introduce some background and notation from spectral theory (see \cite{Auslander_Green_Hahn_flows_homogeneous:1963,Katznelson_2004} and \cite{MR2186251,Kanigowski2023} for a classical and modern presentation respectively). We recall that for two Borel measures $\mu$ and $\nu$ on $\mathbb{S}^1$, we say that $\mu$ is \textit{absolutely continuous} with respect to $\nu$, denoted as $\mu\ll \nu$, if $\nu(A)=0\Rightarrow \mu(A)=0$ for each $A\in \mathcal{B}(\mathbb{S}^1).$ If $\mu\ll \nu$ and $\nu\ll \mu$, then we say that $\mu$ and $\nu$ are equivalent, which will be denoted as $\mu\thicksim  \nu$. The equivalence class of all Borel measures equivalent to $\mu$ is called the \textit{type} of $\mu$.
For a unitary operator $U$ on a Hilbert space $\mathcal{H}$, we define the \textit{spectral measure of} $h\in \mathcal{H}$ as the unique finite Borel measure $\sigma_h$ of $\mathbb{S}^1$ provided by the Bochner–Herglotz theorem, satisfying
$$\langle U^{n}h,h\rangle =\int_{\bS^1} z^{n} d\sigma_h(z), ~\forall n\in \Z. $$
By \cite[Section 2.1]{Kanigowski2023}, for each $f\in \mathcal{H}$ the spectral measure $\sigma_f$ is a discrete measure if and only if $f$ belongs to the closure of the linear span of all vectors that are eigenvectors of $U $. The \textit{maximal spectral type of $U$ on $\mathcal{H}$} is the unique finite Borel measure $\sigma$ on $\bS^1$ such that for every $h\in H$, $\sigma_h\ll \sigma$, and for every measure $\nu$ such that $\nu\ll \sigma$ there is $h\in \mathcal{H}$ satisfying $\sigma_h=\nu$. We say that $U$ has discrete spectrum if the maximal spectral type is a discrete measure on $\bS^1$. We say that an operator $U$ has infinite Lebesgue spectrum if $\mathcal{H}$ decomposes into a direct sum of infinitely many pairwise orthogonal closed $U$-invariant subspaces, each of them exhibiting maximal spectral type equivalent to the Haar measure on $\bS^1$.

The following proposition is central in our method and is a direct corollary of \cite[Lemma~1]{Parry}. In order to state it, for a group $G$ we will denote its center by $Z(G)$.
\begin{proposition}\label{PW}
    Let $X=G/\Gamma$ be a nilmanifold. Let $L\leq Z(G)$ be a rational group and $J=L/(L\cap \Gamma)$ be the associated compact abelian group. Then, we have the decomposition 
    $$L^2(X)=\bigoplus_{\chi\in \widehat{J}}V_\chi, $$
    where $\widehat{J}$ is the character group of $J$ and for $\chi\in \widehat{J}$ we denote 
    \begin{equation}\label{def-V-chi}
       V_\chi:=\{ f\in L^2(X) : f(gx)= \chi(g)f(x), \forall g\in L, \text{ for }\mu\text{-a.e. } x\in X\}.  
    \end{equation}
\end{proposition}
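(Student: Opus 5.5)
The statement to prove is Proposition~\ref{PW}. I would prove it by Fourier analysis of the action of the compact abelian group $J=L/(L\cap\Gamma)$ on $X$.

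First, I check that this action is well defined and measure preserving. Since $L\le Z(G)$, for $l\in L$ and $\gamma\in L\cap\Gamma$ we have
$$l\gamma\cdot g\Gamma=lg\gamma\Gamma=lg\Gamma.$$
Hence $L\cap\Gamma$ acts trivially, and $L$ acts on $X$ through $J$. Rationality of $L$ makes $L\Gamma$ closed, so $L/(L\cap\Gamma)\cong L\Gamma/\Gamma$ is compact; $J$ is also abelian because $L$ is central. The action $J\times X\to X$, $(l,x)\mapsto lx$, is continuous, and it preserves $\mu_X$ because $\mu_X$ is invariant under all left translations. We therefore get a strongly continuous unitary representation
$$\rho:J\to\mathcal U(L^2(X)),\qquad \rho(l)f(x)=f(lx).$$
Continuity is the usual approximation of $L^2$ functions by continuous functions, combined with uniform continuity on the compact space $X$.

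Next, I decompose $\rho$. By the Peter--Weyl theorem for compact abelian groups, a strongly continuous unitary representation splits into the orthogonal direct sum of its isotypic components. Concretely, for $\chi\in\widehat J$ I define
$$P_\chi f(x)=\int_J \overline{\chi(l)}\,f(lx)\,dm_J(l).$$
I then verify the following:
\begin{enumerate}
\item $P_\chi$ is an orthogonal projection. Idempotence comes from Fubini and the invariance of Haar measure; self-adjointness uses that $\rho(l)^*=\rho(l^{-1})$.
\item The range of $P_\chi$ is exactly $V_\chi$. If $f=P_\chi f$, a change of variables gives $f(gx)=\chi(g)f(x)$. Conversely, if $f\in V_\chi$, then $P_\chi f=f$.
\item The projections are mutually orthogonal: $P_\chi P_\psi=0$ for $\chi\ne\psi$, by the orthogonality of characters.
\item They sum to the identity, $\sum_\chi P_\chi=\mathrm{Id}$ strongly.
\end{enumerate}

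For the last point, let $f\perp V_\chi$ for all $\chi$. Then for every $h\in L^2(X)$ the continuous function
$$\phi(l)=\langle \rho(l)f,h\rangle$$
on $J$ has Fourier coefficients $\widehat\phi(\chi)=\langle P_\chi f,h\rangle=0$. Since characters are complete in $L^2(J)$, $\phi\equiv0$, and taking $l=e$ gives $\langle f,h\rangle=0$, so $f=0$.

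The one point needing care is the a.e.\ quantifier in \eqref{def-V-chi}: "for all $g\in L$, for a.e.\ $x$" means each fixed $g$ has its own null set. The direction $V_\chi\subseteq\operatorname{ran}P_\chi$ handles this by using Fubini on $J\times X$. The characters of $J$ are identified with characters of $L$ trivial on $L\cap\Gamma$, which is exactly how $\chi(g)$ for $g\in L$ is read in \eqref{def-V-chi}. Apart from this bookkeeping the argument is the standard Peter--Weyl decomposition, which is also what \cite[Lemma~1]{Parry} provides, so I expect no substantive obstacle.
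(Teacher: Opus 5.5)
Your proposal is correct and takes essentially the paper's approach. The paper gives no argument of its own: it presents the proposition as a direct corollary of \cite[Lemma~1]{Parry}, which is the isotypic (Peter--Weyl) decomposition of $L^2(X)$ under the measure-preserving action of the compact central group $J$, and you have written out that standard argument in full, including the Fubini step needed for the ``for each $g\in L$, for a.e.\ $x$'' quantifier in the definition of $V_\chi$.
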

For $\alpha\in \bS^1$ and a Borel measure $\sigma$ on $\bS^1$ we will denote $\sigma^\alpha$ the measure given by 
$$\int_{\bS^1} f(z) d\sigma^\alpha(z)=\int_{\bS^1} f(\alpha z) d\sigma(z),~ f\in L^1(\bS^1).  $$
The last proposition of this section will reduce the task of showing that a measure is the Lebesgue measure of $\bS^1$, to showing that the translations of the measure are absolutely continuous with respect to itself. Its proof is essentially contained in the proof of \cite[Proposition 3.2]{MR4797105}, but we include it here for the sake of completeness.
\begin{proposition}\label{Lemma-measure-in-circle}
    Let $\sigma$ be a Borel measure on $\bS^1$ such that for each $\alpha\in \bS^1$, $\sigma^\alpha\ll \sigma$. Then $\sigma\thicksim m_{\bS^1}$.
\end{proposition}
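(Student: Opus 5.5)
The plan is to show that $\sigma$ is equivalent to $\sigma * m_{\bS^1}$, and that the latter is a multiple of Haar measure. Assume $\sigma \neq 0$ (otherwise the statement is vacuous or we interpret it trivially) and define the averaged measure
$$\lambda := \int_{\bS^1} \sigma^\alpha \, dm_{\bS^1}(\alpha).$$

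\textbf{Step 1: $\lambda$ is a multiple of Haar measure.} For $f$ continuous, Fubini and the rotation invariance of $m_{\bS^1}$ give
$$\int f\,d\lambda=\int_{\bS^1}\int_{\bS^1} f(\alpha z)\,dm_{\bS^1}(\alpha)\,d\sigma(z)=\sigma(\bS^1)\int f\,dm_{\bS^1}.$$
Hence $\lambda=\sigma(\bS^1)\,m_{\bS^1}\sim m_{\bS^1}$.

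\textbf{Step 2: $\lambda\ll\sigma$.} If $\sigma(A)=0$, then by hypothesis $\sigma^\alpha(A)=0$ for every $\alpha$. Integrating over $\alpha$ gives $\lambda(A)=0$. For this we need $\alpha\mapsto\sigma^\alpha(A)=\sigma(\alpha^{-1}A)$ to be measurable (with the convention $\sigma^\alpha(A)=\sigma(\alpha^{-1}A)$). This holds because it is the integral of the Borel indicator of $\{(\alpha,z):\alpha z\in A\}$ in $z$, which is measurable by Fubini–Tonelli. So $m_{\bS^1}\ll\sigma$.

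\textbf{Step 3: $\sigma\ll m_{\bS^1}$.} This is the main obstacle, and I would handle it by a symmetrization trick. Take $A$ with $m_{\bS^1}(A)=0$ and set $B=A\cup A^{-1}$ or, more usefully, work directly with the translates. By Step 1, $0=\lambda(A)=\int\sigma(\alpha^{-1}A)\,dm(\alpha)$, so $\sigma(\alpha^{-1}A)=0$ for $m$-a.e. $\alpha$. Pick one such $\alpha_0$, so $\sigma(\alpha_0^{-1}A)=0$.

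Now apply the hypothesis with $\alpha_0$ in place of $\alpha$: we have $\sigma^{\alpha_0}\ll\sigma$, hence $\sigma^{\alpha_0}(\alpha_0^{-1}A)=0$. Under the convention above, $\sigma^{\alpha_0}(E)=\sigma(\alpha_0^{-1}E)$, so this only yields $\sigma(\alpha_0^{-2}A)=0$, which is the wrong direction. Instead apply the hypothesis with $\alpha_0^{-1}$: $\sigma^{\alpha_0^{-1}}\ll\sigma$ gives
$$\sigma^{\alpha_0^{-1}}(\alpha_0^{-1}A)=\sigma(\alpha_0\alpha_0^{-1}A)=\sigma(A)=0.$$
Since the hypothesis holds for every $\alpha$, including $\alpha_0^{-1}$, this direction closes. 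Thus $\sigma(A)=0$, so $\sigma\ll m_{\bS^1}$.

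Combining the three steps gives $\sigma\sim m_{\bS^1}$. The only care needed is bookkeeping of the direction of translation in Step 3 and the measurability in Step 2; everything else is routine.
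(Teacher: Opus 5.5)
Your proof is correct and follows the paper's route: averaging the translates $\sigma^\alpha$ against $m_{\bS^1}$ gives a multiple of Haar measure dominated by $\sigma$. Your Step 3, which applies the hypothesis at $\alpha_0^{-1}$, is the paper's terse ``by symmetry $\sigma\ll\sigma^\alpha$, and by the same argument $\sigma\ll m_{\bS^1}$'' made explicit, and you track the constant $\sigma(\bS^1)$ that the paper drops. One small correction: the case $\sigma=0$ is not vacuous but a genuine counterexample, since the hypothesis holds and the conclusion fails; so $\sigma\neq 0$ is a necessary assumption, which the paper also leaves tacit.
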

\begin{proof}
    Since $\sigma^\alpha\ll \sigma$ for each $\alpha\in \bS^1$, we have that 
    $$\int_{\bS^1} \sigma^\alpha dm_{\bS^1}(\alpha) \ll \sigma.  $$
    Since the left-hand side defines a rotation-invariant Borel measure on $\bS^1$, it must coincide with the Lebesgue measure $m_{\bS^1}$. On the other hand, by symmetry $\sigma\ll \sigma^\alpha$ for each $\alpha\in \bS^1$. So, by the same argument, we get that $\sigma \ll m_{\bS^1}$, concluding that $\sigma \thicksim  m_{\bS^1}$.
\end{proof}
\subsection{Lie algebras and Mal'cev bases}
We give a brief overview of the tools from the theory of nilpotent Lie groups and Mal'cev bases that we will use. We will base this overview on the contents of \cite[Chapter 10, sections 1.7 and 4.1]{Host_Kra_nilpotent_structures_ergodic_theory:2018}.

Let $G$ be an $s$-step nilpotent Lie group satisfying our hypothesis (i.e. $G^\circ$ is simply connected and $G$ is generated by $G^\circ$ and an elementa $\tau\in G$). The, the exponential map $\exp:\mathfrak{G}\to G^\circ$ is a diffeomorphism from the Lie algebra $\mathfrak{G}$ of $G^\circ$ onto $G^\circ$. We will denote the inverse diffeomorphism of $\exp$ as $\log:G^\circ\to \mathfrak{G}$. For all $\eta,\xi\in \mathfrak{G}$, $\log(\exp(\xi)\exp(\eta))$ is given by the Baker-Campbell-Hausdorff formula:
\begin{equation}\label{BCH}
    \log(\exp(\xi) \exp(\eta))= \xi + \eta + \frac{1}{2} [\xi,\eta] +\frac{1}{12}[\xi,[\xi,\eta]] -\frac{1}{24} [\eta,[\xi,[\xi,\eta]]]+\cdots.
\end{equation}
Since any term with $s$ iterated brackets is trivial, this sum only contains finitely many nonzero terms. Choosing a basis for the vector space $\mathfrak{G}$, the coordinates of $\log(\exp(\xi) \exp(\eta))$ are polynomials of degree at most $s$ in the coordinates of $\xi$ and $\eta$.
We observe that any closed connected subgroup of $G^\circ$ is simply connected \cite[Lemma 10, chapter 10]{Host_Kra_nilpotent_structures_ergodic_theory:2018}.

We have the following result from Mal'cev \cite{Malcev1949}.

\begin{theorem}\label{malcev}
    Let $X=G/\Gamma$ be an $s$-step nilmanifold with connected and simply connected $G$ and assume that $G$ is not an $(s-1)$-step nilpotent Lie group. If $m=\text{dim}(G)$ and $\mathfrak{G}$ is the Lie algebra of $G$, then $\mathfrak{G}$ admits a base $(\xi_1,\ldots,\xi_m)$ satisfying the following properties:
    \begin{enumerate}
        \item The map $\psi:\R^m\to G $ defined by 
$$\psi(t_1,\ldots,t_m)=\exp(t_1\xi_1)\exp(t_2\xi_2)\cdots \exp(t_m\xi_m) $$
is a diffeomorphism from $\R^m$ onto $G$.
\item The image $\psi(\Z^m)$ of $\Z^m$ under $\psi$ is $\Gamma.$
\item For $0\leq j<m$, the linear span $\mathfrak{K}_j$ of $(\xi_{j+1},\ldots,\xi_m)$ is a Lie subalgebra of $\mathfrak{G}$ and $H_j=\psi(\mathfrak{K}_j)$ is a normal Lie subgroup of $G$.
\item Setting $m_i=\text{dim}(G_i)$ for $1\leq i \leq s$, then $G_i=H_{m-m_i}$.
    \end{enumerate}
\end{theorem}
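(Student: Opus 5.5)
The statement just above is Mal'cev's theorem on the existence of a basis adapted to $\Gamma$ and to the lower central series. I would prove it by induction on $m=\dim G$, building the basis from the bottom of a central series upward.

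\textbf{Step 1: a rational central series.} First I would record the rationality facts I need. Every $G_i$ is $\Gamma$-rational; this is the standard fact that $\Gamma\cap G_i$ is cocompact in $G_i$. Also $\Gamma\cap Z(G)$ is a lattice in $Z(G)$. Next I would refine the lower central series
$$G=G_1\supseteq G_2\supseteq\cdots\supseteq G_s\supseteq G_{s+1}=\{e\}$$
to a chain of connected normal subgroups, each of codimension one in the previous one. Inside $G_i/G_{i+1}$, which is a vector group containing the lattice image of $\Gamma\cap G_i$, I would pick a $\Z$-basis of that lattice. Taking preimages of the partial spans gives intermediate groups $H_j$, and each is normal because $[G,G_i]\subseteq G_{i+1}$. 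Properties 3 and 4 then hold by construction, since $\dim G_i=m_i$ fixes where each $G_i$ sits in the chain.

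\textbf{Step 2: choosing the basis vectors.} For each $j$ I would choose $\gamma_{j+1}\in\Gamma\cap H_j$ whose image generates the infinite cyclic group $(\Gamma\cap H_j)/(\Gamma\cap H_{j+1})\cong\Z$, and set $\xi_{j+1}=\log\gamma_{j+1}$. Then $\xi_{j+1},\ldots,\xi_m$ span $\mathfrak{K}_j=\log H_j$. I would justify this by a dimension count together with the fact that $\log$ of a connected normal subgroup is an ideal of $\mathfrak{G}$.

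\textbf{Step 3: $\psi$ is a diffeomorphism and $\psi(\Z^m)=\Gamma$.} I would induct down the chain. Every $g\in H_j$ can be written uniquely as $\exp(t\xi_{j+1})h$ with $h\in H_{j+1}$, because $H_j/H_{j+1}\cong\R$ via the quotient of $t\xi_{j+1}$. Iterating this decomposition shows that $\psi$ is a smooth bijection. Its inverse is smooth because each coordinate $t_j$ is a polynomial in the earlier coordinates, by the Baker--Campbell--Hausdorff formula \eqref{BCH}. For the lattice statement, $g\in\Gamma\cap H_j$ forces $t_{j+1}\in\Z$, since $\gamma_{j+1}$ generates the quotient. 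Then $\gamma_{j+1}^{-t_{j+1}}g\in\Gamma\cap H_{j+1}$, and induction gives $\psi(\Z^m)=\Gamma$.

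\textbf{Main obstacle.} The hardest point is Step 1: showing that $\Gamma\cap G_i$ is a lattice in $G_i$, so that each quotient $G_i/G_{i+1}$ inherits a lattice. I would first try to prove this by induction on the nilpotency step. Quotient by the central subgroup $G_s$, using that $\Gamma\cap Z(G)$ is cocompact in $Z(G)$. Then observe that $\Gamma\cap G_s$ contains the commutators $[\gamma,\gamma']$ with $\gamma\in\Gamma\cap G_{s-1}$ and $\gamma'\in\Gamma$. The $\R$-span of the logarithms of these commutators is all of $\log G_s$, because $\Gamma$ is Zariski dense; this last fact is the delicate input I would cite.
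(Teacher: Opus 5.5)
The paper does not prove \cref{malcev}; it imports it from Mal'cev \cite{Malcev1949} via \cite[Chapter 10]{Host_Kra_nilpotent_structures_ergodic_theory:2018}. So your proposal necessarily takes a different route: you give the standard textbook proof, and it is correct in outline. Assume each $\Gamma\cap G_i$ is a lattice in $G_i$. Refining the lower central series by $\Z$-bases of the lattices $(\Gamma\cap G_i)G_{i+1}/G_{i+1}$ then gives normal subgroups $H_j$ (normal because $[G,G_i]\subseteq G_{i+1}$). Choosing $\gamma_{j+1}$ to generate $(\Gamma\cap H_j)/(\Gamma\cap H_{j+1})\cong\Z$ gives a basis whose tails span the ideals $\log H_j$. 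Peeling off one coordinate at a time then yields both bijectivity of $\psi$ and $\psi(\Z^m)=\Gamma$. Compared with the bare citation, your route shows that the only substantial input is the rationality of the $G_i$.

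That input deserves a cleaner sketch than yours. Cocompactness of $\Gamma\cap Z(G)$ in $Z(G)$ does not let you pass to $G/G_s$. You first need $\Gamma\cap G_s$ to be a lattice in $G_s$; otherwise $\Gamma G_s/G_s$ is not known to be discrete and the induction hypothesis cannot be applied. Similarly, using $\gamma\in\Gamma\cap G_{s-1}$ presupposes what you are trying to prove one level up.

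Both issues disappear if you use $s$-fold commutators $c=[\gamma_1,[\gamma_2,\dots,\gamma_s]]$ with all $\gamma_i\in\Gamma$. Then $c\in\Gamma\cap G_s$, and since $G$ is $s$-step, \eqref{BCH} gives exactly $\log c=[\log\gamma_1,[\dots,\log\gamma_s]]$. These brackets span $\log G_s$ as soon as $\log\Gamma$ generates $\mathfrak{G}$ as a Lie algebra.

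No Zariski density is needed for that last fact. Suppose the generated subalgebra $\mathfrak{a}$ were proper. Nilpotency gives $\mathfrak{a}+[\mathfrak{G},\mathfrak{G}]\neq\mathfrak{G}$, so $\Gamma\subseteq\exp(\mathfrak{a})$ would lie in a closed normal subgroup $N$ with $G/N\cong\R$. Then $G/\Gamma\to G/N$ would map a compact space onto $\R$, which is impossible.

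A discrete subgroup of the vector group $G_s$ that spans it is a lattice. You can then quotient by $G_s$ and induct on $s$.

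Finally, for smoothness of $\psi^{-1}$, state it as follows. For $g\in H_j$, the coordinate $t_{j+1}(g)$ is a linear functional of $\log g$, namely the differential of $H_j\to H_j/H_{j+1}\cong\R$. Moreover $\log(\exp(-t_{j+1}\xi_{j+1})g)$ is polynomial in $\log g$ by \eqref{BCH}. Hence all coordinates of $\psi^{-1}(g)$ are polynomials in $\log g$.
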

We plan to use \cref{malcev} with $G^\circ$ rather than $G$ in subsequent sections. We will use also the following basic fact from Lie groups theory.
\begin{proposition}\label{connectedness-quotient}
        Let $G$ be a connected and simply connected Lie group. Let $H$ be any normal closed connected subgroup of $G$. Then, $G/H$ is connected and simply connected. 
\end{proposition}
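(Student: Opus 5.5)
We need to prove \cref{connectedness-quotient}: if $G$ is connected and simply connected and $H\trianglelefteq G$ is closed and connected, then $G/H$ is connected and simply connected.

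Connectedness is immediate. The quotient map $\pi:G\to G/H$ is continuous and surjective, so $G/H=\pi(G)$ is connected. Since $H$ is a closed normal subgroup, $G/H$ is a Lie group and $\pi$ is a smooth homomorphism.

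For simple connectedness we use the fibration structure. The map $\pi:G\to G/H$ is a principal $H$-bundle, since locally it admits smooth sections (a standard fact for a closed subgroup of a Lie group). In particular it is a Serre fibration with fiber $H$. Its long exact homotopy sequence contains the segment
$$\pi_1(G)\longrightarrow \pi_1(G/H)\longrightarrow \pi_0(H).$$
Here $\pi_1(G)=0$ by hypothesis and $\pi_0(H)$ is trivial because $H$ is connected. Exactness then forces $\pi_1(G/H)=0$.

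To avoid the fibration machinery, one can argue directly by path lifting. Take a loop $\gamma$ in $G/H$ based at $eH$. Using local sections of $\pi$, lift $\gamma$ to a path $\tilde\gamma$ in $G$ starting at $e$. Its endpoint lies in $H$, so, $H$ being connected, we can join that endpoint back to $e$ by a path inside $H$. The concatenation is a loop in $G$, which is null-homotopic since $G$ is simply connected. Projecting this null-homotopy by $\pi$ gives a null-homotopy of $\gamma$, because the segment inside $H$ maps to the constant path at $eH$.

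In the nilpotent setting of the paper there is an even more concrete route. Both $G$ and $H$ are simply connected nilpotent, so $\exp$ is a diffeomorphism and $\mathfrak{H}\subseteq\mathfrak{G}$ is an ideal. Then $G/H$ is the connected nilpotent Lie group with Lie algebra $\mathfrak{G}/\mathfrak{H}$, and its exponential map is a diffeomorphism from that vector space. Hence $G/H$ is diffeomorphic to a Euclidean space and is in particular simply connected.

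The only step that needs real care is the existence of local sections for $\pi$, equivalently that $\pi$ is a fibration. This is the standard result that $G\to G/H$ is a principal bundle for any closed subgroup $H$; we take it as known. In the nilpotent case, a Mal'cev-type coordinate choice adapted to $\mathfrak{H}$, as in \cref{malcev}, gives explicit global sections instead.
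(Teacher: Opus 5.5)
Your main argument is correct and is the paper's proof: connectedness because $G/H$ is a continuous image of $G$, then the fibration $H\to G\to G/H$ and exactness of $\pi_1(G)\to\pi_1(G/H)\to\pi_0(H)$. The path-lifting variant is also fine. The nilpotent aside, however, needs its own justification that $\exp$ on $\mathfrak{G}/\mathfrak{H}$ is injective (for example via BCH modulo the ideal $\mathfrak{H}$), because for connected nilpotent groups that property is equivalent to the simple connectivity you are trying to prove.
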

As we were unable to find a proof of \cref{connectedness-quotient} in the literature, we provide one here.
\begin{proof}
    First, since the natural projection $G\to G/H$ is continuous and $G$ is connected and simply connected, then $G/H$ is connected and path connected. Since $H$ is closed, $G\to G/H$ is a fiber bundle. Thus, by \cite[Theorem 4.41 and Proposition 4.48]{HatcherAT}) we get the exact sequence
    $$\pi_1(G)\to \pi_1(G/H)\to \pi_0(H) .$$
    Since $G$ is simply connected we have $\pi_1(G)$ is trivial. Since $H$ is connected, we see that $\pi_0(H)$ is trivial. We conclude that $\pi_1(G/H)$ is trivial and thus $G/H$ is simply connected. 
\end{proof}

\section{Structure of nonergodic nilsystems}
In this section we prove \cref{Theo-A}. We start stating the result of Leibman mentioned in the introduction.
\begin{theorem}[{Cf. \cite[Theorem 2.2]{Leibman03}}]\label{theo2.2L}
    Let $V$ be a connected subnilmanifold of $X$, let $K$ be a connected component of $\pi^{-1}(V)$ and $A$ be a closed subgroup of $G$. There exists a closed subnilmanifold $Y_{V, A}$ of $X$ such that

    \begin{enumerate}[(a)]
        \item for any $x \in V$ one has $\overline{\{ax : a\in A\}} \subseteq a Y_{V, A}$ whenever $g \in K$ is such that $ \pi(g)=x$,
        \item  there exists a zero $\mu_K$-measure set $P \subset K$-where $\mu_K$ denotes a Haar measure on $K$-such that for any $x \in V\setminus \pi(P)$ one has $\overline{\{ax : a\in A\}} =g Y_{V, A}$ whenever $g \in K, \pi(g)=x$.

    \end{enumerate}
We call the subnilmanifold $Y_{V, A}$ the generic orbit for $A$ on $V$; in the case $V=X$ the nilmanifold $Y_{V, A}$ corresponds to the generic orbit for $A$ and will be denoted by $Y_A$.
\end{theorem}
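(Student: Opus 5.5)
Since this is Leibman's \cite[Theorem 2.2]{Leibman03}, I would reconstruct its proof rather than build on it. The plan is to translate orbit closures to the base point, use that there are only countably many rational subgroups, and use real analyticity in Mal'cev coordinates to get a dichotomy "everywhere on $K$ or on a null set".

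\textbf{Step 1: translate to the base point.} For $g\in K$ with $\pi(g)=x$ we have
\[
\overline{\{ax: a\in A\}}=g\,\overline{\{(g^{-1}ag)\Gamma : a\in A\}} .
\]
So the statement is about the closures $C_g:=\overline{\{(g^{-1}ag)\Gamma: a\in A\}}\subseteq X$ as $g$ ranges over $K$, and we want a single subnilmanifold $Y_{V,A}$ with $C_g\subseteq Y_{V,A}$ for all $g\in K$ and equality off a $\mu_K$-null set. By the orbit-closure results for nilmanifolds (Leibman, Ratner-type theorems for unipotent/polynomial actions, as recalled in \cref{section-2.1} for $\overline{\mathrm{Orb}}_T$), each $C_g$ is a finite union of subnilmanifolds $R\,\Gamma$ with $R$ a rational subgroup. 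I would first reduce to $A$ generated by $A^\circ$ and finitely many elements, so that the components are controlled.

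\textbf{Step 2: the dichotomy.} There are only countably many rational subgroups $R\le G$, since each is determined by a rational subalgebra in Mal'cev coordinates (\cref{malcev}). For each such $R$ set
\[
E_R:=\{g\in K:\ C_g\subseteq R\Gamma\}=\{g\in K:\ g^{-1}ag\in R\Gamma \text{ for all } a\in A\}.
\]
This set is closed. Moreover, in Mal'cev coordinates on $K\subseteq G^\circ$, the condition $g^{-1}ag\in R\Gamma$ becomes a family of polynomial conditions in $g$ for each fixed lattice coset, by \cref{BCH}. Since $K$ is a connected analytic manifold, I expect each $E_R$ to be either all of $K$ or $\mu_K$-null.

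Let $\mathcal{R}$ be the family of $R$ with $E_R=K$; it is nonempty because $G\in\mathcal{R}$. Intersections of rational subgroups are rational, and a descending chain of closed subgroups of a Lie group with finitely many components stabilizes. Hence $\mathcal{R}$ has a smallest element $R_0$, and I would set $Y_{V,A}:=R_0\Gamma$. Part (a) then holds by construction.

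\textbf{Step 3: generic equality.} Let $P:=\bigcup_{R\notin\mathcal{R}}E_R$. This is a countable union of null sets, hence $\mu_K$-null. Take $g\in K\setminus P$ with $\pi(g)=x$. Then $C_g=R_g\Gamma$ for some rational $R_g$, and $g\in E_{R_g}$, so $R_g\in\mathcal{R}$. Minimality of $R_0$ gives $R_0\subseteq R_g$, while (a) gives $R_g\Gamma\subseteq R_0\Gamma$. Hence $C_g=Y_{V,A}$, which is part (b).

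\textbf{Main obstacle.} The hard point is the analytic dichotomy in Step 2. The union over lattice cosets is countable, not finite, so $E_R$ is a countable union of analytic subvarieties. I would argue as follows: if $E_R$ had positive measure, one analytic piece has positive measure, so it is all of $K$ by connectedness and analyticity; it then remains to check that $E_R$ is that piece. A secondary difficulty is handling a disconnected $A$ and $C_g$ being a finite union of components. For this I would pass to a finite-index subgroup of $A$ and use that $K$ is connected, so the component pattern is constant on $K$.
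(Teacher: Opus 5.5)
The paper gives no proof of this statement. It is quoted from Leibman; the remark after it only records that his exceptional set is countably polynomial. Your proposal therefore has to stand on its own. Its architecture (countably many candidate closures plus an ``all of $K$ or $\mu_K$-null'' analytic dichotomy) is consistent with that description.

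There is, however, a genuine gap: the smallest element $R_0$ of $\mathcal R$ need not exist, and Step 3 uses it essentially (``minimality of $R_0$ gives $R_0\subseteq R_g$''). The underlying problem is that $E_{R_1}\cap E_{R_2}\neq E_{R_1\cap R_2}$ in general, because $R_1\Gamma\cap R_2\Gamma$ can be strictly larger than $(R_1\cap R_2)\Gamma$.

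Take $G=\R^2$, $\Gamma=\Z^2$, $V=X$, $K=G$ and $A=\langle(1/2,0)\rangle$. Then $C_g=\{(0,0),(1/2,0)\}+\Z^2$ for every $g$.
\begin{itemize}
\item Both $\R\times\{0\}$ and $\{(t,2t):t\in\R\}$ lie in $\mathcal R$ (note $(1/2,0)=(1/2,1)-(0,1)$), but their intersection $\{0\}$ does not.
\item Worse, $\mathcal R$ has no minimal element at all. Any $R\in\mathcal R$ contains some $r\in(1/2,0)+\Z^2$. Since $3r\equiv(1/2,0)$ modulo $\Z^2$, the group $\langle 3r\rangle\subsetneq R$ again belongs to $\mathcal R$.
\item The chain $\langle 3^k r\rangle$, $k\in\N$, also refutes your claim that descending chains of closed subgroups stabilize. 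Already $\Z\supset 2\Z\supset 4\Z\supset\cdots$ in $\R$ does.
\end{itemize}

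The repair needs nothing new: do not look for a minimum.
\begin{itemize}
\item Work with the countably many sets $S=R\Gamma/\Gamma$ rather than the groups $R$; note that $E_R$ depends only on $R\Gamma$.
\item Let $P$ be the union of those $E_S$ that are $\mu_K$-null.
\item For $g\in K\setminus P$, the closure $C_g$ is itself such an $S$ with $g\in E_S$, so $E_S$ is not null and hence $E_S=K$.
\item Consequently, for $g,g'\in K\setminus P$ you get $C_{g'}\subseteq C_g$ and $C_g\subseteq C_{g'}$. So $C_g$ is constant on the nonempty set $K\setminus P$.
\item Taking $Y_{V,A}$ to be this common value gives (b), and $E_{Y_{V,A}}=K$ is exactly (a).
\end{itemize}

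Two smaller remarks.

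First, the dichotomy you single out as the main obstacle is routine if you fix $a$ first. Let $A'\subseteq A$ be countable and dense. Then $E_R=\bigcap_{a\in A'}U_a$, where $U_a=\{g\in K: g^{-1}ag\in R\Gamma\}$. Each $U_a$ is a countable union, over $\gamma\in\Gamma$ and the components of $R$, of zero sets of real-analytic functions on the connected manifold $K$. If $\mu_K(E_R)>0$, each $U_a$ has a piece of positive measure. That piece must then be all of $K$, so $U_a=K$ for every $a\in A'$ and $E_R=K$.

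Second, your countability argument via rational subalgebras only covers connected $R$. Two things need an argument or a citation to Leibman:
\begin{itemize}
\item the countability of disconnected rational subgroups (or of subnilmanifolds through $e_X$);
\item the fact that each $C_g$ is a single subnilmanifold $R_g\Gamma/\Gamma$, not merely a finite union as you write in Step 1.
\end{itemize}
Once the latter is quoted, the finite-index reduction for disconnected $A$ becomes unnecessary.
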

   \begin{remark}
        In the original theorem in \cite{Leibman03} the set $P$ is stated as a \textit{countably polynomial} set. This in particular implies that $P$ is of first category and has measure $0$. Since we only use this last information, we state the theorem with $P$ having measure $0$ only. 
    \end{remark}
    
Let $(X=G/\Gamma,\mu_X,T_\tau)$ be a nilsystem. We will use \cref{theo2.2L} with the connected component $X_0=G^{\circ}\Gamma$ of the projection of the identity $e_X=e_G\Gamma$ in $X$. Let $m\in \N$ be the minimum such that $\tau^mX_0=X_0$. This way, $(X_0,\mu_{X_0},T_{\tau^m} )$ is a connected nilsystem. The connected component of the identity $e_G$ in $\pi^{-1}(X_0)$ is $G^{\circ}$, the connected component of $e_G$ in $G$. Using \cref{theo2.2L} with $V=X_0$ gives a closed subnilmanifold $Y$ such that all but a set of measure zero of $x\in X_0$ satisfy that if $x= g\Gamma$ with $g\in G^{\circ}$, then 
$$\overline{\mathrm{Orb}}_{T_{\tau^m}}(x)=gY. $$
Denote by $H\subseteq G$ the subgroup associated to the subnilmanifold $Y$, i.e. $Y=Hy$ where $y\in X_0$. We will make several reductions. First, we can assume without loss of generality that $y=e_X$. Indeed, let $x\in X_0$ be such that 
$$\overline{\mathrm{Orb}}_{T_{\tau^m}} (x)=gY. $$
Without loss of generality, we can assume $x=e_X$ by a change of base point (see \cite[Chapter 11, section 1.2]{Host_Kra_nilpotent_structures_ergodic_theory:2018}). We will do this assumption throughout the article. Furthermore, we can assume $g=e_G$ by replacing $H$ by $gHg^{-1}$ and $y$ by $gy$. Denote $y=g_y\Gamma$ with $g_y\in G^{\circ}$. As $ \overline{\mathrm{Orb}}_{T_{\tau^m}}(e_X)=Hg_y\Gamma$, we have that $\Gamma\subseteq Hg_y \Gamma$. In particular, this implies that $g_y\in H\Gamma$ which yields $Y=H\Gamma$. Consequently, we can assume that the generic orbit $Y$ is such that
$$Y=\overline{\mathrm{Orb}}_{T_{\tau^m}}(e_X), \text{ and }Y=H\Gamma.$$
Second, we can also assume that for all $g\in G$, $g\tau^{n}g^{-1}\in H$. Indeed, 
let $P\subseteq G^{\circ}$ be with $m_{G^\circ}(P)=0$ given by \cref{theo2.2L} such that for $g\in G^{\circ}\setminus P$ we have $\overline{\mathrm{Orb}}_{T_{\tau^m}}(g\Gamma)= gH\Gamma. $ Thus
$\overline{\mathrm{Orb}}_{T_{g^{-1}\tau^m g}}(e_X)= H\Gamma, $ revealing that $H\Gamma$ is $g^{-1}\tau^m g $-invariant. This extends to all $g\in G^\circ$ by density of $G^\circ\setminus P$ in $G^\circ$. Setting $H'=\langle H \cup \{g^{-1}\tau^{m}g: g\in G\}\rangle$, we see that $H'\Gamma=H\Gamma$. Thus, replacing $H$ by $H'$ we have our claim. 

Finally, for our final reduction, we will see that $H$ can be assumed to be equal to $\langle \tau^m, H^\circ\rangle$, where $H^{\circ}$ denotes the connected component of the identity $e_G$ in $H$. Indeed, on one hand we observed that $[\tau^m,G^\circ]$ is connected, as it is generated by the image of the connected set $G^\circ$ through the continuous function $g\mapsto [\tau^m,g]$. As $[\tau^m,G^\circ]\subseteq H$, we have $[\tau^m,G^\circ]\subseteq H^\circ$. This way $\langle H^\circ,\tau^m\rangle= \langle \tau^m \rangle H^\circ$. On the other hand, since transitivity and ergodicity are equivalent in nilsystems, the system $(Y=H\Gamma,T_{\tau^m})$ is minimal. Hence, the Haar measure $\mu_Y$ on $Y$ gives strictly positive measure to the open set $ H^{\circ}\Gamma$, so there is $l\in \N$ such that
$$ H\Gamma =\bigsqcup_{i=0}^{l-1} (\tau^m)^i H^{\circ}\Gamma.$$
This implies that $H\Gamma=\langle \tau^m,H^\circ\rangle \Gamma $, and since the group $\langle \tau^m,H^\circ\rangle$ clearly contains $ \{g^{-1}\tau^{m}g: g\in G\}$, we can replace $H$ by $\langle \tau^m,H^\circ\rangle$. In this way, we can assume without loss of generality that $H=\langle \tau^m, H^{\circ}\rangle$.

The group $\langle {\tau^m}, H^{\circ}\rangle$ will be called the \textit{Leibman group} associated to the system $(X_0,\mu_{X_0},T_{\tau^m})$. We will show that this group is normal in $G$. To do so, we will need the following result.

\begin{theorem}[{\cite[Theorem 3.5]{Leibman03}}]\label{theo3.5L}
   Let $A$ be a subgroup of $G$ and $Y_A$ be the generic orbit for $A$. The connected components of $Y_A$ are normal subnilmanifolds of $X$.  
\end{theorem}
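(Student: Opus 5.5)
The plan is to identify the generic orbit $Y_A$ with the closure of an orbit of a subgroup that is \emph{normalized} by $G^\circ$, and then to read off normality of the connected components from the rational-closure results of \cite{GK-19} recalled in \cref{section-2.1}. After the change of base point used above, write $Y_A=H\Gamma$ with $H$ rational. Let
$$S=\{g\in G:\ gH\Gamma=H\Gamma\},$$
a closed subgroup containing $H$.

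First, by \cref{theo2.2L}(b), for $g$ outside a null set $P\subseteq G^\circ$ we have $\overline{Ag\Gamma}=gH\Gamma$. In particular $gH\Gamma$ is $A$-invariant, so $g^{-1}Ag\subseteq S$. Since $G^\circ\setminus P$ is dense and $S$ is closed, this extends to every $g\in G^\circ$. Hence $S$ contains the subgroup
$$N=\langle g^{-1}ag:\ a\in A,\ g\in G^\circ\rangle,$$
which is normalized by $G^\circ$ by construction.

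Second, I would show that $H\Gamma=\overline{N\Gamma}$.
\begin{itemize}
\item[(i)] Since $A\subseteq N$ and $\Gamma$ is generic after the base-point change, $H\Gamma=\overline{A\Gamma}\subseteq\overline{N\Gamma}$.
\item[(ii)] Since $N\subseteq S$, we get $N\Gamma\subseteq NH\Gamma=H\Gamma$, and $H\Gamma$ is closed, so $\overline{N\Gamma}\subseteq H\Gamma$.
\end{itemize}
Thus $Y_A=\overline{N\Gamma}$, where $N\cap G^\circ$ is normalized by $G^\circ$.

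Third, I would describe the components of $\overline{N\Gamma}$. Let $N_0$ be the connected subgroup generated by the identity components in $N\cap G^\circ$. It is still normalized by $G^\circ$, because conjugation by $G^\circ$ preserves connectedness. By \cite[Proposition 3.4 and Theorem 3.5]{GK-19}, $L=\mathcal{J}(N_0,\Gamma)$ is the smallest rational connected normal subgroup of $G^\circ$ containing $N_0$, and $L\Gamma=\overline{N_0\Gamma}$. The component of $Y_A$ through $e_X$ is open and closed in $Y_A$ and contains $L\Gamma$. Since $N/N_0$ is countable (discrete), $\overline{N\Gamma}$ is the closure of countably many translates $nL\Gamma$. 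As $Y_A$ is a compact manifold, hence has finitely many components, only finitely many distinct translates occur. The component of $e_X$ is therefore exactly $L\Gamma=Le_X$, a normal subnilmanifold. Any other component has the form $n L\Gamma=L(n\Gamma)$, using $nLn^{-1}=L$ (for $n\in G^\circ$ this is normality; for general $n$ the conjugate of $L$ is again the minimal rational normal connected subgroup containing $nN_0n^{-1}=N_0$). So every component is of the form $Lx$ with $L\trianglelefteq G^\circ$, as claimed.

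The main obstacle I expect is the third step, specifically ruling out that the identity component of $\overline{N\Gamma}$ is strictly larger than $\overline{N_0\Gamma}$. The concern is that a discrete part of $N$ (for instance powers of an element of $A$ outside $G^\circ$) could accumulate and generate extra connected directions. To handle this, I would first try a Mal'cev-coordinate argument via \cref{malcev} on $G^\circ$, showing that closures of cosets $n\overline{N_0\Gamma}$ accumulate only along directions lying in a normal rational subgroup. If that fails, I would instead replace $N_0$ by the identity component of $\overline{N\Gamma}$'s stabilizer, and check directly that it is normalized by $G^\circ$ using the relation $hN h^{-1}=N$ for $h\in G^\circ$.
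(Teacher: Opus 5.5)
Your Steps 1 and 2 are fine. The stabilizer $S$ of $Y_A$ is closed and contains $g^{-1}Ag$ for every $g\in G^\circ$. Hence $Y_A=\overline{N\Gamma}$ with $N$ normalized by $G^\circ$. (The paper only quotes this result from Leibman, so there is no in-paper proof to compare with.)

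\textbf{The gap is in Step 3.} The identification $Y_A^\circ=\overline{N_0\Gamma}=\mathcal{J}(N_0,\Gamma)\Gamma$ is false in general, so the main line of the plan cannot be completed. The inference ``$Y_A$ has finitely many components, so only finitely many distinct translates $nL\Gamma$ occur'' does not follow: infinitely many translates can accumulate inside a single component. Take $X=\T^2=\R^2/\Z^2$ and $A=\{(k\alpha,0):k\in\Z\}$ with $\alpha$ irrational. Then $N=A$ is discrete, $N_0$ and $L$ are trivial, and $L\Gamma=\{e_X\}$. But $Y_A=\T\times\{0\}$ is a connected circle. The same happens already for an ergodic rotation $\tau$ of a torus, which is exactly the paper's use with $A=\langle\tau^m\rangle$. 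There $N=\langle\tau\rangle$ and $N_0=\{e\}$, yet $Y_A=X$. So the obstacle you flag cannot be ruled out; it is the typical situation. The connected directions of the orbit closure come from accumulation of the discrete part of $N$, and $\mathcal{J}(N_0,\Gamma)$ is the wrong candidate group.

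\textbf{Your fallback names the right object, but the argument for it is missing.} Conjugating by $h\in G^\circ$ sends $S$ to $\mathrm{Stab}(hY_A)$, and $hY_A=\overline{Nh\Gamma}$ is a different orbit closure. So $hNh^{-1}=N$ by itself does not give $hS^\circ h^{-1}=S^\circ$. One way to close the gap:
\begin{itemize}
\item[(i)] For $\gamma\in\Gamma\cap G^\circ$ you have $\gamma Y_A=\overline{\gamma N\Gamma}=\overline{N\gamma\Gamma}=Y_A$. Hence $S\cap G^\circ$ is a closed subgroup of $G^\circ$ containing the lattice $\Gamma\cap G^\circ$.
\item[(ii)] Consequently $\mathrm{Ad}_\gamma$ preserves the Lie algebra $\mathfrak{s}$ of $S^\circ$ for all such $\gamma$. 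In Mal'cev coordinates $\psi$ for $G^\circ$ (\cref{malcev}), the condition $\mathrm{Ad}_{\psi(t)}\mathfrak{s}\subseteq\mathfrak{s}$ is the vanishing of finitely many polynomials in $t$. These vanish on $\Z^m$, hence identically, so $S^\circ\trianglelefteq G^\circ$. This is the same ``polynomial condition with infinitely many zeros'' trick as in \cref{prop-for-H0}.
\item[(iii)] Since $H\subseteq S$, you get $Y_A^\circ=H^\circ e_X\subseteq S^\circ e_X\subseteq Y_A^\circ$, so $Y_A^\circ=S^\circ e_X$, and $S^\circ$ is rational because $S^\circ\Gamma$ is the preimage of the closed set $Y_A^\circ$. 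The other components are $hS^\circ e_X=(hS^\circ h^{-1})(he_X)$ for $h\in H$, and $hS^\circ h^{-1}$ is still normal in $G^\circ$.
\end{itemize}
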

We will use \cref{theo3.5L} to prove the following.
\begin{proposition}\label{prop-for-H0}
    Let $(X=G/\Gamma,\mu,T_\tau)$ be a nilsystem, $m\in \N$ be minimal such that $\tau^mX_0=X_0$, and $H\subseteq G$ be the Leibman group of $(X_0,\mu_{X_0},T_{\tau^m})$. Then, the connected component of the identity $H^{\circ}$ in $H$ is normal in $G$.
\end{proposition}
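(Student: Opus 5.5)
By \cref{theo3.5L} applied with $A=\langle\tau^m\rangle$, the connected components of the generic orbit $Y=H\Gamma$ are normal subnilmanifolds of $X$. The component of $Y$ containing $e_X$ is $H^\circ\Gamma$, since $H^\circ\Gamma$ is open in $H\Gamma$ and, by the decomposition $H\Gamma=\bigsqcup_{i<l}\tau^{mi}H^\circ\Gamma$, also closed. Hence $H^\circ\Gamma=N\Gamma$ for some closed subgroup $N$ that is normal in $G^\circ$. I may take $N$ connected by replacing it with the identity component of $\overline{N}$, a characteristic subgroup. The plan is then to identify $H^\circ$ with $N$, or at least to show that $H^\circ$ inherits normality from $N$, and afterwards to upgrade normality in $G^\circ$ to normality in $G=\langle G^\circ,\tau\rangle$.

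\textbf{Step 1: $H^\circ=N$.} Both $H^\circ$ and $N$ are connected closed subgroups of $G^\circ$ with $H^\circ\Gamma=N\Gamma$. Locally near $e_G$, the set $H^\circ\Gamma$ coincides with $H^\circ$, because $\Gamma$ is discrete. More precisely, pick a neighbourhood $U$ of $e_G$ with $U\cap U^{-1}U\Gamma'\subseteq\dots$; concretely, choose $U$ small enough that $UU^{-1}\cap\Gamma=\{e\}$. Then $H^\circ\Gamma\cap U$ is a union of the disjoint pieces $H^\circ\gamma\cap U$. The piece through $e$ is $H^\circ\cap U$, and the other pieces stay away from $e$ when $U$ is shrunk. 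The same holds for $N$. Comparing the pieces through the identity gives $H^\circ\cap U'=N\cap U'$ for a small neighbourhood $U'$. Since both groups are connected, each is generated by any neighbourhood of the identity, so $H^\circ=N$.

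An alternative is to argue through Lie algebras: the tangent space to $H^\circ\Gamma$ at $e_X$ is $\mathrm{Lie}(H^\circ)=\mathrm{Lie}(N)$, and connected subgroups of the simply connected $G^\circ$ are determined by their Lie algebras. This step needs care about whether the double coset pieces accumulate at $e$. I expect this to be the main technical obstacle. I would handle it with the closedness of $H^\circ\Gamma$ (rationality) and a Mal'cev-coordinate argument via \cref{malcev} applied to $G^\circ$: the set $H^\circ\Gamma$ is a finite-dimensional submanifold, so its local structure at $e$ is a single sheet.

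\textbf{Step 2: normality in $G$.} With $H^\circ$ normal in $G^\circ$, it remains to show that $\tau H^\circ\tau^{-1}=H^\circ$. Conjugation by $\tau$ maps $H^\circ$ to a connected closed subgroup $\tau H^\circ\tau^{-1}$. It contains $\tau[\tau^m,G^\circ]\tau^{-1}=[\tau^m,\tau G^\circ\tau^{-1}]=[\tau^m,G^\circ]$. For the comparison I use the reduction made before the proposition, namely that $g\tau^mg^{-1}\in H$ for all $g\in G$. Taking $g=\tau^{-1}h$ with $h\in H^\circ$ shows that $\tau H\tau^{-1}$ normalizes the relevant set. Better: $\tau$ commutes with $\tau^m$ and maps $G^\circ\Gamma$-orbit closures of $T_{\tau^m}$ to orbit closures, because $\tau\,\overline{\mathrm{Orb}}_{T_{\tau^m}}(x)=\overline{\mathrm{Orb}}_{T_{\tau^m}}(\tau x)$.

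Apply this to a generic point $x=g\Gamma$, chosen so that $\tau g\tau^{-m}\cdot$ (adjusted into $G^\circ$ via $\tau^{-m}$ if needed) is also generic; this is possible because the exceptional set $P$ is null and conjugation by $\tau$ preserves $m_{G^\circ}$. We obtain $\tau gH\Gamma=g'H\Gamma$ with $g'\in G^\circ$, and passing to connected components gives $\tau gH^\circ\Gamma=g'H^\circ\Gamma$. Taking tangent directions at the points $\tau g$ and $g'$ then yields $\tau H^\circ\tau^{-1}=c H^\circ c^{-1}$ for some $c\in G^\circ$. By Step 1 the right-hand side equals $H^\circ$. Hence $H^\circ$ is normalized by $G^\circ$ and $\tau$, so it is normal in $G$.
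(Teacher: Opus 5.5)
\textbf{Step 2 has a genuine gap.}

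Your Step 1, identifying the normal subgroup supplied by \cref{theo3.5L} with $H^\circ$, is more detailed than the paper, which reads normality of $H^\circ$ in $G^\circ$ directly off \cref{theo3.5L}. It can be completed using that $H^\circ\Gamma$ is closed. The problem is Step 2.

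\textbf{Where Step 2 fails.} If $m=1$, then $\tau=\tau^m\in H$, and $\tau$ normalizes $H^\circ$ automatically because $H^\circ\trianglelefteq H$. So the whole content of the proposition is the case $m>1$. In that case $\tau\notin G^\circ\Gamma$, since $\tau X_0=X_0$ if and only if $\tau\in G^\circ\Gamma$. Hence for $x=g\Gamma\in X_0$, the point $\tau x$ lies in the component $\tau X_0$, which is disjoint from $X_0$. No adjustment by $\tau^{\pm m}$ can move it back, since $\tau^{\pm m}$ preserves every component. Consequently the identity $\tau gH\Gamma=g'H\Gamma$ with $g'\in G^\circ$ is false: the left side lies in $\tau X_0$, while the right side lies in $X_0$, because $H\Gamma\subseteq X_0$.

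More generally, you can transport $T_{\tau^m}$-orbit closures by $\tau^k$ and compare them with the generic orbits on $X_0$ only when $m\mid k$. In that case you merely recover that $\tau^m$ normalizes $H^\circ$, which you already know. The dynamical transport cannot yield normality under $\tau$ itself.

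\textbf{The missing ingredient.} You need an algebraic input that passes from $\tau^m$ to $\tau$; this is what the paper does.
\begin{itemize}
\item Since $G$ is nilpotent, $\mathbf{Ad}_\tau$ is unipotent on the Lie algebra $\mathfrak G$ of $G^\circ$. So the entries of $\mathbf{Ad}_{\tau^n}=\mathbf{Ad}_\tau^{\,n}$ are polynomials in $n$.
\item Let $\mathfrak h$ be the Lie algebra of $H^\circ$. Take any $v\in\mathfrak h$ and any linear functional $\ell$ vanishing on $\mathfrak h$.
\item The polynomial $n\mapsto \ell(\mathbf{Ad}_{\tau^n}v)$ vanishes on all of $m\Z$, because $\tau^{mk}\in H$ normalizes $H^\circ$. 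Hence it vanishes identically.
\item Therefore $\mathbf{Ad}_\tau\mathfrak h=\mathfrak h$. Since $H^\circ=\exp(\mathfrak h)$, this gives $\tau H^\circ\tau^{-1}=H^\circ$.
\end{itemize}
An equivalent way to say this: $\log\mathbf{Ad}_\tau=\frac1m\log\mathbf{Ad}_{\tau^m}$ is a polynomial in $\mathbf{Ad}_{\tau^m}$, so it preserves $\mathfrak h$, and hence so does $\mathbf{Ad}_\tau$.

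Replacing your Step 2 with this argument, and keeping Step 1 for normality in $G^\circ$, completes the proof.
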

\begin{proof}
   By \cref{theo3.5L}, $H^{\circ}$ is a normal subgroup of $G^{\circ}$, so it is enough to show that $H^{\circ}$ is normalized by $\tau$. Notice that $H^{\circ}$ is normalized by $\tau^m$, as $H^\circ$ is normal in $H$. Let $\mathfrak{G}$ be the Lie algebra of $G$ and let $\textbf{Ad}_\tau:\mathfrak{G}\to \mathfrak{G}$ be the automorphism defined by the adjoint representative of $\tau$, i.e. the function induced by $\textbf{Ad}_\tau(g)=\tau g\tau^{-1}$ for $g\in G$. As $\textbf{Ad}_\tau$ is unipotent, we can choose a proper basis in $\mathfrak{G}$ representing $\textbf{Ad}_\tau$ in its Jordan canonical form with $1$'s on the diagonal. Thus, the entries in $\textbf{Ad}_{\tau^n}$ are polynomials in $n$. Let $\mathfrak{H}$ be the Lie algebra of $H$. Then $\tau H^{\circ} \tau^{-1}=H^{\circ}$ is equivalent to $\textbf{Ad}_\tau( \mathfrak{H})=\mathfrak{H}$. As $\textbf{Ad}_{\tau^{nm}}( \mathfrak{H})=\mathfrak{H}$ for each $n\in \Z$, we conclude that $\textbf{Ad}_{\tau^{n}}( \mathfrak{H})=\mathfrak{H}$ for each $n\in \Z$ as this is a polynomial condition with infinite zeros. Thus, we conclude that $\tau H^{\circ} \tau^{-1}=H^{\circ}$, concluding that $H^{\circ}$ is normal in $G$.
\end{proof}

Now, with \cref{prop-for-H0} at hand, we are able to show that the Leibman group is also normal in $G$.

\begin{proposition}\label{H-normal}
         Let $(X=G/\Gamma,\mu,T_\tau)$ be a nilsystem, $m\in \N$ be minimal such that $\tau^mX_0=X_0$, and $H\subseteq G$ be the Leibman group of $(X_0,\mu_{X_0},T_{\tau^m})$. Then $H$ is normal in $G$.
\end{proposition}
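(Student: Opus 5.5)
We have $H=\langle \tau^m, H^\circ\rangle = \langle\tau^m\rangle H^\circ$, with $H^\circ$ normal in $G$ by \cref{prop-for-H0}. Since $G$ is generated by $G^\circ$ and $\tau$, it suffices to check that $gHg^{-1}\subseteq H$ for $g\in G^\circ$ and for $g=\tau$. Because $H^\circ$ is already normal in $G$, the only remaining generator to control is $\tau^m$. So the plan is to show that $g\tau^m g^{-1}\in H$ for every $g\in G^\circ$, and that $\tau\tau^m\tau^{-1}\in H$.

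The second condition is trivial, since $\tau\tau^m\tau^{-1}=\tau^m$. For the first, write
$$g\tau^m g^{-1}=[g,\tau^m]\,\tau^m .$$
It therefore suffices to prove that $[g,\tau^m]\in H^\circ$ for all $g\in G^\circ$.

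In the reductions preceding the definition of the Leibman group, we arranged that $g^{-1}\tau^m g\in H$ for all $g\in G$. We also observed there that $[\tau^m,G^\circ]$ is connected: it is generated by the image of the connected set $G^\circ$ under the continuous map $g\mapsto[\tau^m,g]$, and that image contains $e_G$. Since $[\tau^m,g]=\tau^m g\tau^{-m}g^{-1}$ and $g\tau^{-m}g^{-1}\in H$, every $[\tau^m,g]$ lies in $H$. Hence $[\tau^m,G^\circ]$ is a connected subgroup of $H$ containing the identity, so $[\tau^m,G^\circ]\subseteq H^\circ$. Taking inverses gives $[g,\tau^m]\in H^\circ$, and therefore $g\tau^m g^{-1}\in H^\circ\tau^m\subseteq H$.

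Combining these steps: for $g\in G^\circ$, conjugation by $g$ maps $H^\circ$ into itself (by \cref{prop-for-H0}) and maps $\tau^m$ into $H$. Conjugation by $\tau$ fixes $\tau^m$ and preserves $H^\circ$. Thus conjugation by every generator of $G$, and by its inverse, maps $H$ into $H$, so $H$ is normal in $G$.

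The only delicate point is making sure the replacement $H'=\langle H\cup\{g^{-1}\tau^mg\}\rangle$ from the reduction step survives the final replacement of $H$ by $\langle\tau^m,H^\circ\rangle$. Here one should note that $\langle \tau^m, H^\circ\rangle$ still contains each $g^{-1}\tau^mg$, as the text asserts. This is precisely the containment $[\tau^m,G^\circ]\subseteq H^\circ$ used above, applied before the replacement, and $H^\circ$ is unchanged by that replacement.
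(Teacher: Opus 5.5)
Your proof is correct and follows the paper's argument. You check conjugation by $g\in G^\circ$ and by $\tau$ on the generators $\tau^m$ and $H^\circ$ of $H$, using \cref{prop-for-H0} for $H^\circ$ and $g\tau^m g^{-1}\in H^\circ\tau^m$ for $\tau^m$. The one difference is in justifying that second fact: you derive $[\tau^m,G^\circ]\subseteq H^\circ$ directly from the reduction $g^{-1}\tau^m g\in H$ plus connectedness, which is exactly what is needed, whereas the paper cites the stronger inclusion $[\tau,G]\subseteq H^\circ$, which it only establishes later, in the proof of \cref{first-part-theo-A}.
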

\begin{proof}
Since $[\tau,G]=[\tau,G^\circ]\subseteq H^\circ$, we have that for $g\in G^{\circ}$,
$$gHg^{-1}= g\langle \tau^m, H^{\circ}\rangle g^{-1}=\langle g \tau^m g^{-1},gH^{\circ}g^{-1}\rangle =\langle \tau^m , H^{\circ}\rangle= H,$$
where we used the fact that $H^{\circ}$ is normalized by $G^{\circ}$ and that $g\tau^m g^{-1}H^{\circ}=\tau^m H^{\circ}$. That being so, it is enough to show that $H$ is normalized by $\tau$ to conclude. This is obtained by noticing that
   $$\tau H \tau^{-1}= \tau \langle \tau^m, H^{\circ}\rangle \tau^{-1} = \langle \tau \tau^m\tau^{-1} , \tau H^{\circ}\tau^{-1} \rangle =\langle \tau^m, H^{\circ}\rangle = H.$$
   \end{proof}
We define the \textit{Leibman group} associated with the nilsystem $(X,\mu,T)$ as $H:=\langle  H^{\circ},\tau\rangle$, where $H^{\circ}$ is the connected component of the Leibman group of $(X_0,\mu_{X_0},T^m)$. Observe that no confusion arises from using $H^{\circ}$ to denote the connected component of the Leibman group of $X$ and the connected component of the Leibman group of $X_0$, since both of these groups coincide. Now we prove \cref{Theo-A}.
\begin{proposition}\label{first-part-theo-A}
    Let $(X=G/\Gamma,\mu, T)$ be a nilsystem and $H$ its Leibman group. Then, $H$ is normal in $G$ and for all $g\in G^{\circ}$ except for a set of $m_{G^{\circ}}$-measure zero,     $$\overline{\mathrm{Orb}}_T(g\Gamma)=gH\Gamma. $$
    In particular, $H$ is a rational subgroup of $G$.
\end{proposition}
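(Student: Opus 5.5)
The plan is to transfer the orbit description of the $T_{\tau^m}$-action on $X_0$, obtained from \cref{theo2.2L} after the reductions above, to the full $T=T_\tau$-action on $X$. Normality of $H=\langle H^\circ,\tau\rangle$ is proved first, in the same way as \cref{H-normal}. By \cref{prop-for-H0}, $H^\circ$ is normal in $G$. Since $[\tau,G^\circ]\subseteq H^\circ$, for $g\in G^\circ$ we have $g\tau g^{-1}\in \tau H^\circ$, so $gHg^{-1}=H$. Trivially $\tau H\tau^{-1}=H$. Because $G=\langle G^\circ,\tau\rangle$, this gives normality of $H$ in $G$.

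For the orbit statement, fix $g\in G^\circ$ outside the null set $P$ of \cref{theo2.2L}, so that $\overline{\mathrm{Orb}}_{T_{\tau^m}}(g\Gamma)=gH_0\Gamma$, where $H_0=\langle\tau^m,H^\circ\rangle$ is the Leibman group of $X_0$. I split the $T$-orbit by residues modulo $m$:
$$\overline{\mathrm{Orb}}_T(g\Gamma)=\bigcup_{i=0}^{m-1}\tau^i\,\overline{\mathrm{Orb}}_{T_{\tau^m}}(g\Gamma)=\bigcup_{i=0}^{m-1}\tau^i gH_0\Gamma .$$
I then rewrite $\tau^i g=g\,(g^{-1}\tau^i g)$. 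Since $g^{-1}\tau^i g\in \tau^iH^\circ$ and $H_0$ is normalized by $G^\circ$ (the computation in \cref{H-normal}), we get $\tau^igH_0\Gamma=g\tau^iH_0\Gamma$. The union over $i$ is therefore $g\bigl(\bigcup_i\tau^iH_0\bigr)\Gamma=gH\Gamma$, because $H=\bigcup_{i=0}^{m-1}\tau^iH_0$ (using $\tau$ normalizing $H^\circ$). The exceptional set is just $P$, which has $m_{G^\circ}$-measure zero.

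For rationality, $H_0\Gamma=Y$ is closed, being a subnilmanifold (a closed orbit). Hence $H\Gamma=\bigcup_{i<m}\tau^iH_0\Gamma$ is a finite union of closed sets (translates of a closed set by homeomorphisms), so it is closed. Alternatively, one can argue that $H\Gamma=\overline{\mathrm{Orb}}_T(e_X)$ once $e_X$ is chosen generic, as in the reductions.

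I expect the main obstacle to be the bookkeeping that $\tau^i g H_0\Gamma=g\tau^iH_0\Gamma$. It relies on $g^{-1}\tau g\tau^{-1}\in[G^\circ,\tau]\subseteq H^\circ$ together with normality of $H^\circ$ under $\tau$, and on checking that the reduction "$g\tau^mg^{-1}\in H$", made for the base point, really holds for all $g\in G^\circ$. That fact is exactly the density argument given before \cref{prop-for-H0}, so I would cite it there.
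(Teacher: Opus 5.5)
There is a genuine gap. The structure of your proposal matches the paper's proof: normality first, then splitting the $T$-orbit by residues mod $m$, moving $g$ past $\tau^i$, and reading off rationality from the finite union. But the one fact everything rests on is never established.

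\textbf{The missing fact.} You use $[\tau,G^\circ]\subseteq H^\circ$, and in the orbit computation $g^{-1}\tau^ig\in\tau^iH^\circ$ for $0<i<m$. The reductions before \cref{prop-for-H0} only give $g\tau^mg^{-1}\in H_0$, i.e.\ $[\tau^m,G^\circ]\subseteq H^\circ$.
\begin{itemize}
\item The density argument cannot be rerun with $\tau$ in place of $\tau^m$. For $g\in G^\circ\setminus P$ it only shows that $H_0\Gamma$ is invariant under $g^{-1}\tau^mg$. When $m>1$, the element $g^{-1}\tau g$ maps $H_0\Gamma\subseteq X_0$ into the different component $\tau X_0$.
\item Citing \cref{H-normal} does not help, since its computation only uses $g\tau^mg^{-1}H^\circ=\tau^mH^\circ$.
\item The fact cannot be bypassed. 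We have $H\cap G^\circ=H^\circ$, because $H=\bigcup_{k}\tau^kH^\circ$ and $\langle\tau\rangle\cap G^\circ$ is trivial. Since $[g,\tau]\in G^\circ$, normality of $H$ is equivalent to $[G^\circ,\tau]\subseteq H^\circ$.
\end{itemize}
So, as written, neither normality of $H$ nor the identity $\tau^igH_0\Gamma=g\tau^iH_0\Gamma$ is justified when $m>1$.

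\textbf{How the paper fills it.} The missing idea is the passage from $\tau^m$ to $\tau$, which the paper's proof handles with a polynomial argument. Fix $g\in G^\circ$. The map $n\mapsto\log([g,\tau^n])$ is polynomial in $n$, because $\mathrm{Ad}_\tau$ is unipotent and by the Baker--Campbell--Hausdorff formula. It takes values in the linear subspace $\mathfrak{H}=\log(H^\circ)$ for every $n\in m\N$. A polynomial satisfying linear equations at infinitely many points satisfies them identically, so $[g,\tau]\in H^\circ$.

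Equivalently, conjugation by $\tau$ induces a unipotent automorphism of the simply connected group $G^\circ/H^\circ$ whose $m$-th power is the identity, so it is the identity. Once you add this step, the rest of your argument goes through as written and coincides with the paper's.
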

\cref{first-part-theo-A} is the reason we call $H$ the Leibman group of a nilsystem $(X=G/\Gamma,\mu,T)$, since it describes the orbit of almost every point in $X$, extending \cref{theo2.2L} in the case $A=\overline{\langle \tau \rangle}$.
\begin{proof}
Denote by $H'$ the Leibman group of $(X_0,\mu_{X_0},T^m)$. First we prove that $H$ is normal in $G$. Let $l\in \N$ such that $\tau^{ml}H^{\circ}\Gamma =H^{\circ}\Gamma$. Since for each $n\in \N$, $[G^{\circ},\tau^{nm}]\subseteq H^{\circ}$, then we have that $\log([G^{\circ},\tau^{nm}])$ is contained in the Lie algebra $\mathfrak{H}$ associated to $H^{\circ}$. Observe that $\mathfrak{H}$ is a subspace of $\mathfrak{G}$ which is defined by linear equations equations. On the other hand, by the Baker–Campbell–Hausdorff formula \cref{BCH}, for each $g\in G^{\circ}$ 
$\log([g,\tau^{n}])$ are polynomial expressions on $n$, such that $\log([g,\tau^{mn}])$ satisfies the linear equations defining $\mathfrak{H}$ for each $n$. This implies that $\log([g,\tau^n])$ belongs to $\mathfrak{H}$ for each $n\in\N$, concluding that $[\tau,G^{\circ}]\subseteq H^{\circ}$. As well as in the proof of \cref{H-normal}, this implies that $H=\langle  H^{\circ}, \tau\rangle$ is normal in $G$.

Let $A\subseteq G^{\circ}$ be the $m_{G^{\circ}}$-measure zero set given by \cref{theo2.2L} such that for all $g\in G^{\circ}\setminus A$,
    $$\overline{\mathrm{Orb}}_{T^m}(g\Gamma)=gH'\Gamma. $$
    This implies that for $g\in G^{\circ}\setminus A$,
    $$ \overline{\mathrm{Orb}}_{T}(g\Gamma)=\bigcup_{i=0}^{m-1}\tau^i gH'\Gamma=g \bigcup_{i=0}^{m-1}(g^{-1}\tau g)^iH' \Gamma =g \bigcup_{i=0}^{m-1}\tau^iH' \Gamma =g H\Gamma .$$
\end{proof}
Now we prove the second part of \cref{Theo-A}.
\begin{theorem}\label{Theo-A-part-2}
Let $(X=G/\Gamma,\mu,T_{\tau})$ be a nilsystem and $H\subseteq G$ its Leibman group. Then, for each $k\in \N_0$, a function $f\in L ^2(X)$ is $\mathcal{Z}_k$-measurable if and only if it is $H_{k+1}$-invariant, namely
    \begin{equation*}
       L^2(X,\mathcal{Z}_k) \cong L^2(H_{k+1}\backslash X,\mathcal{B}(H_{k+1}\backslash X)).
    \end{equation*}
\end{theorem}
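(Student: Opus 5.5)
We reduce the statement to the ergodic components and then use the known description of $\mathcal{Z}_k$ for ergodic nilsystems. The tool is the ergodic-decomposition criterion \cref{prop-CFH} of Chu, Frantzikinakis and Host: $f\in L^\infty(\mathcal{Z}_{k,\mu})$ if and only if $f\in L^\infty(\mathcal{Z}_{k,\mu_x})$ for $\mu$-a.e.\ $x$. By \cref{first-part-theo-A}, for $m_{G^\circ}$-a.e.\ $g\in G^\circ$ the orbit closure of $g\Gamma$ is the subnilmanifold $gH\Gamma$. The ergodic component of $\mu$ at $x=g\Gamma$ is then the Haar measure $\mu_{gH\Gamma}$ of this minimal (hence uniquely ergodic) subnilsystem. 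Translating the base point, we identify $gH\Gamma$ with $H/(H\cap g\Gamma g^{-1})$, with $T_\tau$ acting by left translation. The same description holds on the other connected components $\tau^i X_0$, since these are images of $X_0$ under $T$.

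The second step is to compute the $\mathcal{Z}_k$ factor of each component. On the ergodic nilsystem $(gH\Gamma,\mu_{gH\Gamma},T_\tau)$ we may regard the group acting as $g^{-1}Hg=H$ (using normality of $H$ from \cref{first-part-theo-A}) with lattice $H\cap g\Gamma g^{-1}$. The group $H$ is generated by $H^\circ$ and $\tau$, so it satisfies the standing hypotheses. By the known ergodic result (Host--Kra, Chapters 11 and 13), its $\mathcal{Z}_k$-factor is the quotient by the $(k+1)$-th lower central term $H_{k+1}$. Concretely, a function on $gH\Gamma$ is $\mathcal{Z}_{k,\mu_x}$-measurable if and only if it is invariant under left translation by $gH_{k+1}g^{-1}=H_{k+1}$. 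Here $H_{k+1}$ is normal in $G$, being characteristic in the normal subgroup $H$, and it is rational in $H$, so the quotient makes sense.

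The final step is to glue the components. Let $f$ be $H_{k+1}$-invariant on $X$. Then its restriction to a.e.\ orbit closure $gH\Gamma$ is $H_{k+1}$-invariant, hence $\mathcal{Z}_{k,\mu_x}$-measurable, and \cref{prop-CFH} gives that $f$ is $\mathcal{Z}_k$-measurable. Conversely, if $f$ is $\mathcal{Z}_k$-measurable, then for a.e.\ $x$ its restriction is $\mu_{gH\Gamma}$-a.e.\ $H_{k+1}$-invariant. Integrating over the disintegration shows that for each $h\in H_{k+1}$ we have $f(hy)=f(y)$ for $\mu$-a.e.\ $y$, because the disintegration measures are themselves $H_{k+1}$-invariant. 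This gives $H_{k+1}$-invariance in $L^2$. We first work with bounded $f$ and then extend by density, since both subspaces are closed and $\mathcal{Z}_k$-projection commutes with truncation. Finally, the $H_{k+1}$-invariant $L^2$ functions are exactly $L^2(H_{k+1}\backslash X)$. Rationality of $H_{k+1}$ is obtained by applying the ergodic theory on $X_0$ and using that $H_{k+1}$ is a term of the lower central series of the rational group $H$; this is what is needed to use $G/H_{k+1}\Gamma$ as in \cref{Theo-A}.

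The main obstacle is the measure-theoretic bookkeeping in the second and third steps. First, we must check that the ergodic decomposition really is $x\mapsto\mu_{gH\Gamma}$, including points in components other than $X_0$ and points outside the null set $P$. Second, we must check that applying the ergodic result with the base-point-dependent lattice $H\cap g\Gamma g^{-1}$ yields the same group $H_{k+1}$ for every component. Normality of $H$ and $H_{k+1}$ in $G$ is exactly what makes this uniform, so we expect that step to go through cleanly.
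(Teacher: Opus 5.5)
Your proposal is correct and follows essentially the same route as the paper. Both reduce via the Chu--Frantzikinakis--Host criterion \cref{prop-CFH} to ergodic components, identify these (using \cref{first-part-theo-A}) with Haar measures on the orbit closures $gH\Gamma\cong H/(H\cap g\Gamma g^{-1})$, and use normality of $H_{k+1}$ in $G$ to make the known ergodic description of $\mathcal{Z}_k$ as the quotient by $H_{k+1}$ uniform in $g$. The bookkeeping you add is what the paper's short proof leaves implicit, and it goes through: $\mu_x=\mu_{gH\Gamma}$ via generic points and unique ergodicity, a countable dense subset of $H_{k+1}$ and the $H_{k+1}$-invariance of the $\mu_x$ to pass invariance between $\mu$ and the $\mu_x$, truncation to go from $L^\infty$ to $L^2$, and transferring rationality of $H_{k+1}$ from $H$ to $G$.
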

\begin{proof}
Let $k\in \N_0$ be a nonnegative integer. By \cref{first-part-theo-A} we have that $\mu$-a.e. $x=\tau^{i}g\Gamma\in X$ with $g\in G^{\circ}$ and $i\in \N_0$,
\begin{equation}\label{Orbit-generic-point}
    \overline{\mathrm{Orb}}_{T_\tau}(x)= \overline{\mathrm{Orb}}_{T_\tau}(g\Gamma)=  gH \Gamma .
\end{equation}

Since $H_{k+1}$ is a normal rational subgroup of $G$, the nilmanifold $ G/H_{k+1}\Gamma$ is well defined and is a factor of $X$. Let $f\in L^2(X)$ be a function. First, by \cref{prop-CFH}, 
$$f \text{ is }\mathcal{Z}_k(X)\text{-measurable if and only if }f \text{ is } \mathcal{Z}_{k,\mu_x}(X) \text{-measurable for } \mu\text{-a.e. }x\in X.$$
 Second, since $H_{k+1}$ is normal, by \cref{Orbit-generic-point} we have $\mu$-a.e. $x=\tau^{i} g\Gamma\in X$, $f$ is $\mathcal{Z}_{k,\mu_x}$ measurable if and only if $f$ is $H_{k+1}$-invariant, concluding.
\end{proof}
\section{Spectrum of nonergodic nilsystems}
We will now turn to the representation of the discrete-spectrum factor. Given a not necessarily ergodic nilsystem $(X=G/\Gamma,\mu_X,T)$, its discrete-spectrum factor can be larger than $G/G_2\Gamma$. A trivial example comes from taking the transformation $T$ as left rotation by an element in the center of the group $\tau\in Z(G)$. In this case, $(X=G/\Gamma,\mu_X,T)$ is a system of order $1$ with pure discrete spectrum. Moreover, the factor of order $1$ and the discrete-spectrum factor of $(X=G/\Gamma,\mu_X,T)$ coincide. This is not always the case for nonergodic systems, as the following example in a $2$-step nilsystem shows.
\begin{example}\label{example-2.1}
    Consider $(\T^2,\mu,T)$ where $T(x,y)=(x,x+y)$. This transformation is clearly nonergodic (with ergodic components of $\mu$ given by $\delta_x\otimes m$) and of order $1$. We notice that $L^2(X,\mu, \mathcal{Z}_0)$ corresponds to the space generated by $((x,y)\to e(px))_{p\in \Z}$. 
    
    It turns out that $L^2(X,\mu,\mathcal{Z}_0)$ actually coincides with the discrete-spectrum factor of $(\T^2,\mu,T)$. Indeed, for this observe that the system $(\T^2,T)$ posses an orthonormal basis $((x,y)\to e(px+qy))_{p,q\in \Z}$. Thus, it is enough to see that the functions $(x,y)\to e(px)$ present discrete spectrum, while the functions $(x,y)\to e(qy)$ present continuous spectrum. Indeed, if we denote $f(x,y)=e(px)$ then
    $$\int \overline{f} \cdot T^n f d\mu = \int e(-px) e(px) d\mu(x,y)= 1,   $$
    showing that $f$ exhibits discrete spectrum. 
    On the other hand, if we denote $g(x,y)=e(qy)$ then we have that for $n\in \Z\setminus\{0\}$
    $$\int \overline{g} \cdot T^n g d\mu = \int e(-qy) e(qy + nqx) d\mu(x,y) = \int e(nqx) dm_\T(x) = \ind{q=0},  $$
    which implies that $g$ exhibits Lebesgue spectrum. Thus, we get the following decomposition 
    $$L^2(\T^2)= L^2(\{0\}\times \T \backslash \T^2)\oplus L^2(\T\times \{0\} \backslash \T^2) \oplus \{0\}, $$
    where $L^2(\{0\}\times \T \backslash \T^2)=L^2(X,\mu,\mathcal{Z}_0)$ is the discrete-spectrum factor of $(\T^2,\mu,T)$, $ L^2(\T\times \{0\} \backslash \T^2)$ is the weak-mixing nonuniform component and exhibits infinite Lebesgue spectrum, and $\{0\}$ is the trivial uniform component of the space.  
\end{example}
The previous example shows that a more delicate decomposition plays an important role to identify the discrete-spectrum factor of a nonergodic nilsystem. The general idea will be to split $L^2(X)$ in three:
$$L^2(X)= L^2( \mathcal{J}([\tau,G],\Gamma)\backslash X) \oplus \left( L^2( \mathcal{J}([\tau,G],\Gamma)\backslash X)^\perp \cap L^2([H,H]\backslash X)\right) \oplus L^2([H,H]\backslash X)^\perp,$$
where we will see that $L^2( \mathcal{J}([\tau,G],\Gamma)\backslash X)$ is the discrete-spectrum factor of $(X,\mu,T)$, and thus will contain the discrete spectrum of the system; $\left( L^2( \mathcal{J}([\tau,G],\Gamma)\backslash X)^\perp \cap L^2([H,H]\backslash X)\right) $ is the weak-mixing nonuniform component and will exhibit infinite Lebesgue spectrum; and finally $L^2([H,H]\backslash X)^\perp$ is the uniform component and will exhibit infinite Lebesgue spectrum as well. The study of the spectrum of $L^2( \mathcal{J}([\tau,G],\Gamma)\backslash X)$ and $L^2([H,H]\backslash X)^\perp$ will rely in similar techniques to those of \cite{MR4797105}. However, when it comes to study the spectrum on the weak mixing nonuniform component, i.e. $ L^2( \mathcal{J}([\tau,G],\Gamma)\backslash X)^\perp \cap L^2([H,H]\backslash X)$, the techniques from \cite{MR4797105} are not enough. Instead, we rely in structural results of system of order 1 from Host and Frantzikinakis \cite{FRANTZIKINAKIS_HOST_2018}, which allows to study the spectrum through Mal'cev coordinates. 

\subsection{The discrete-spectrum and uniform components}

We start proving that the orthocomplement of the $\mathcal{Z}_1$-factor possesses purely infinite Lebesgue spectrum. 
\begin{theorem}\label{Spectrum-uniform-part}
    Let $(X=G/\Gamma,\mu,T)$ be a nilsystem and $H\subseteq G$ its Leibman group. Then $T$ exhibits infinite Lebesgue spectrum in $L^2([H,H] \backslash X)^\perp$.
\end{theorem}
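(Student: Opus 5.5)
We would follow the strategy of \cite{MR4797105}: induct on the nilpotency step of $H$, using \cref{PW} to split $L^2(X)$ along characters of a central rational subgroup, and \cref{Lemma-measure-in-circle} to show that each nontrivial-character piece has Lebesgue spectral type.

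\textbf{Step 1: reduction to a central subgroup.} Let $s$ be the nilpotency step of $H$, so $H_s\neq\{e\}$ and $H_{s+1}=\{e\}$. If $s\le1$, then $[H,H]$ is trivial, so there is nothing to prove unless $L^2([H,H]\backslash X)^\perp\neq 0$; we therefore assume $s\ge2$. We filter the space as
\[
L^2([H,H]\backslash X)^\perp=\bigoplus_{j=2}^{s}\Bigl(L^2(H_{j+1}\backslash X)\ominus L^2(H_j\backslash X)\Bigr).
\]
Each $H_j$ is normal and rational in $G$, since it is a term of the lower central series of the normal rational group $H$ (\cref{Theo-A-part-2}). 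Passing to the factor $G/H_{j+1}\Gamma$, it therefore suffices to treat the top layer: assume $H_{s+1}=\{e\}$ and prove infinite Lebesgue spectrum on $L^2(H_s\backslash X)^\perp$. The group $L:=H_s$ is rational and abelian but need not be central in $G$. It is central modulo what we need, because $[L,\tau]\subseteq H_{s+1}=\{e\}$, and we restrict to the subspace where $[G^\circ,L]$ acts trivially. If $[G,H_s]$ is nontrivial, we instead first filter by the central rational group $Z$ obtained as the last nontrivial term of $H_s\supseteq[G,H_s]\supseteq\cdots$. By \cref{PW} this gives
\[
L^2(H_s\backslash X)^\perp=\bigoplus_{\chi\neq 1}V_\chi,
\]
and each $V_\chi$ is $T$-invariant because $\tau$ commutes with $L$.

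\textbf{Step 2: Lebesgue type on each $V_\chi$.} Fix $\chi\neq1$ and $f\in V_\chi$. Since $\chi$ is nontrivial on $L=[H,H_{s-1}]$, we can choose $h\in H_{s-1}$, or, more conveniently, $h\in[\tau,G]\cdots\subseteq H_{s-1}$ in the $\tau$-direction, such that $t\mapsto\chi([\,\tau,h_t])$ sweeps the whole circle along a one-parameter subgroup $h_t=\exp(t\xi)\subseteq H_{s-1}^\circ$. Then
\[
\tau h_t=[\tau,h_t]\,h_t\tau, \qquad [\tau,h_t]\in L \text{ central},
\]
so the rotated function $f_t:=f\circ h_t$ satisfies
\[
\langle T^n f_t,f_t\rangle=\overline{\chi}([\tau,h_t])^{n}\,\langle T^n f,f\rangle \quad(\text{up to convention}),
\]
that is, $\sigma_{f_t}=\sigma_f^{\alpha_t}$ with $\alpha_t$ ranging over all of $\bS^1$. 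Since $f_t$ lies in the cyclic structure generated by $V_\chi$, we obtain $\sigma_f^\alpha\ll\sigma_{\max}(V_\chi)$ for every $\alpha$. \cref{Lemma-measure-in-circle} then shows that the maximal spectral type on $V_\chi$ is Lebesgue. The main obstacle is exactly this step: guaranteeing that the map $t\mapsto[\tau,h_t]$ has image in $L$ on which $\chi$ is nonconstant. Ergodicity is unavailable, so we must use that $H$ is generated by $\tau$ and $H^\circ$. This gives $H_s=[\tau,H_{s-1}^\circ]\cdot[H^\circ,H_{s-1}^\circ]$, and for commutators inside $H^\circ$ we cannot use $\tau$ directly. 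We would handle them by replacing $\tau$ with $\tau^{n}$ and averaging over the generic orbit $gH\Gamma$ (\cref{first-part-theo-A}), so that the generic point makes $\tau$ act on $H^\circ$-fibres like an ergodic nilrotation. On each fibre the ergodic result of \cite{MR4797105} gives Lebesgue spectrum, and we then integrate over fibres: a direct integral of Lebesgue types is Lebesgue.

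\textbf{Step 3: infinite multiplicity.} Either infinitely many characters $\chi\neq1$ of $L/(L\cap\Gamma)$ occur, giving infinitely many orthogonal invariant pieces $V_\chi$, each of Lebesgue type whenever it is nonzero. Or, within a single $V_\chi$, we can produce infinitely many orthogonal subspaces by further splitting according to characters of the torus $H_{s-1}/(H_{s-1}\cap\Gamma)H_s$ on which $\chi$ is nontrivial. In either case the nonzero summands are Lebesgue, which yields infinite Lebesgue spectrum on $L^2([H,H]\backslash X)^\perp$.
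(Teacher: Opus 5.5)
There is a genuine gap, and it sits exactly where you flag the main obstacle. Your Step 1 reduction does not hold up around it either.

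\cref{PW} requires $L\le Z(G)$, and the top layer $H_s$ need not be central. Neither of your workarounds repairs this. Restricting to where $[G^\circ,L]$ acts trivially leaves the complement untreated. Filtering instead by the last nontrivial term $Z$ of $H_s\supseteq[G,H_s]\supseteq\cdots$ breaks Step 2, because nothing forces any commutator $[\tau,h]$ to lie in $Z$ nontrivially. Compare the paper's $4\times 4$ example, where no nontrivial $[\tau,g]$ is central.

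Even when $L$ is central, the claim that every $\chi\neq1$ is nonconstant along some $t\mapsto[\tau,h_t]$ is the whole content of the step. ``Replace $\tau$ by $\tau^n$ and average over generic orbits'' is not an argument for it.

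Your fibrewise fallback, applying \cite[Theorem 1.4]{MR4797105} on ergodic components and integrating, does give $\sigma_f\ll m_{\bS^1}$ for every $f$. It does not give that the maximal type on a given $V_\chi$ is equivalent to $m_{\bS^1}$, and your Step 3 relies on that. ``A direct integral of Lebesgue types is Lebesgue'' would need a measurable selection of fibrewise Lebesgue cyclic subspaces, which you do not supply.

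The extra splitting of $V_\chi$ in Step 3 by characters of $H_{s-1}/(H_{s-1}\cap\Gamma)H_s$ also fails. Those characters are trivial on $H_s$, which is incompatible with $H_s$ acting by $\chi\neq1$ on $V_\chi$. The splitting is not $T$-invariant either, since $\tau h=[\tau,h]h\tau$ with $[\tau,h]\in H_s$.

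The paper separates the two tasks. Absolute continuity comes from the fibrewise route alone. By \cref{Theo-A-part-2} and \cref{prop-CFH}, any $f\perp L^2([H,H]\backslash X)$ is $\mathcal{Z}_1$-uniform on a.e.\ ergodic component, so $\sigma_f=\int\sigma_{f,\mu_x}\,d\mu(x)\ll m_{\bS^1}$, with no filtration needed. For the type and multiplicity, the paper passes to the $2$-step factor $G/G_3\Gamma$. There $L=[H,H]\subseteq G_2$ is central in $G$, so \cref{PW} applies to $L$ itself.

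The idea you are missing is the substitute for ergodicity in that factor:
\begin{enumerate}
\item $[H,\tau]=[H^\circ,\tau]$ is connected, so $\chi([H,\tau])$ is either $\{1\}$ or $\bS^1$.
\item Suppose it were $\{1\}$. The set $\{\tau^n\gamma:\ n\in\N,\ \gamma\in\Gamma\cap H\}$ is dense in $H$, because $\overline{\mathrm{Orb}}_T(e_X)=H\Gamma$. Hence $\chi([h,g])=\lim_i\chi([\tau^{n_i}\gamma_i,\tau^{m_i}\zeta_i])=\lim_i\chi([\gamma_i,\zeta_i])=1$ for all $h,g\in H$, which forces $\chi\equiv1$.
\item So $\chi([H,\tau])=\bS^1$ for every $\chi\neq1$. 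Translating a maximal-type vector of $V_\chi$ by $h\in H$ gives $\sigma^\alpha\ll\sigma$ with $\alpha=\chi([h,\tau])$ ranging over all of $\bS^1$, and \cref{Lemma-measure-in-circle} finishes.
\end{enumerate}
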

\begin{proof}
    Let $f\in L^2([H,H] \backslash X)^\perp$. By \cref{Theo-A-part-2}, for $\mu$-a.e. $x\in X$, $f$ is $Z_{1,\mu_x}$-uniform. In particular, by \cite[Theorem 1.4]{MR4797105} if $\sigma_{f,\mu_x}$ denotes the spectral measure of $f$ in $L^2(X,\mu_x)$, then we have that $\sigma_{f,\mu_x}\ll m_{\bS^1}$ for $\mu$-a.e. $x\in X$. We observe that if $\sigma_f$ is the spectral measure of $f$ in $L^2(X,\mu)$, then
    $$ \sigma_f = \int_X \sigma_{f,\mu_x} d\mu(x).$$
    Let $A\subseteq \bS^1$ be a Borel set such that $m_{\bS^1}(A)=0$. Since $\mu$-a.e. $x\in X$, $\sigma_{f,\mu_x}\ll m_{\bS^1}$, we have that $\mu$-a.e. $x\in X$, $\sigma_{f,\mu_x}(A)=0$. This implies that $\sigma_f(A)=0$, concluding that $\sigma_f\ll m_{\bS^1}. $
    
    To prove that the maximal spectral type on $L^2(X)$ is countable and achieved, it is enough to study a factor of $X$. By replacing $X$ by $G_3\backslash X$, we reduce to the case in which $X$ is a $2$-step nilsystem. Hence, the group $[H,H]$ is rational group which lies in $Z(G)$. Using \cref{PW} with the group $L:=[H,H]$ we have that
    \begin{equation}\label{eq-countable-split}
     L^2([H,H]\backslash X)^\perp=\bigoplus_{\chi \in \widehat{J}\setminus\{1\}} V_\chi,    
    \end{equation}
    where $J=L/(L\cap \Gamma)$ and $V_\chi$ are defined as in \cref{def-V-chi}. Let $\chi \in \hat{L}\setminus\{1\}$ such that $\chi(L\cap \Gamma)\equiv 1$. We will show that $V_\chi$ has maximal spectral type the Lebesgue measure of $\bS^1$. This is enough to conclude by \cref{eq-countable-split}. First, we prove that $\chi([H,\tau])= \bS^1$. Assume by contradiction that this is not true. The group $[H,\tau]=[H^\circ,\tau]$ is connected by being a group generated by the image of the connected set $H^\circ$ through the continuous function $h\mapsto [h,\tau]$. This being so, by continuity of $\chi$, we have that $\chi([H,\tau])=\{1\}$, as $\{1\}$ and $\bS^1$ are the two only connected subgroups of $\bS^1$. Let $h,g\in H$, by minimality we have that there are sequences $(n_i)_{i\in \N},(m_i)_{i\in \N}\subseteq \N$ and $(\gamma_{i})_{i\in \N},(\zeta_{i})_{i\in \N}\subseteq \Gamma\cap H$ such that $\tau^{n_i}\gamma_{i}\to h$ and $\tau^{m_i}\zeta_{i}\to g,$ as $i\to \infty.$ Since $\chi([H,\tau])=\{1\}$, we get
    \begin{align*}
        \chi([h,g])= \lim_{i\to\infty} \chi([\tau^{n_i}\gamma_i,\tau^{m_i}\zeta_i]) =\lim_{i\to\infty} \chi([\gamma_i,\zeta_i]) =1.
    \end{align*}
    We conclude that $\chi\equiv 1$, which is a contradiction. Thus, $\chi([H,\tau])= \bS^1$.

    Let $f\in V_\chi$ which spectral measure attains the maximal spectral type $\sigma$. For $h\in H$ denote $\alpha:=  \chi([h,\tau])$. We have that 
    \begin{align*}
       \int \overline{T_hf} \cdot T_\tau^nT_hfd\mu&= \int \overline{f}(hx)\cdot  f(h\tau^nx) d\mu(x)= \chi([h,\tau])^n\int \overline{f}\cdot T_\tau^nf d\mu\\
       &= \alpha^n \int_{\bS^1} z^n d\sigma(z)= \int_{\bS^1} z^n d\sigma^\alpha(z),
    \end{align*}
where we recall that $\sigma^\alpha$ denotes the measure $\sigma$ translated by $\alpha$. Thus, $\sigma^\alpha\ll \sigma$ for all $\alpha\in \chi([H,\tau])=\bS^1    $. By \cref{Lemma-measure-in-circle} we conclude that $\sigma=m_{\bS^1}$, finishing the proof.  
\end{proof}

Now we show that $T$ exhibits pure discrete spectrum in $L^2(\mathcal{J}([\tau,G],\Gamma)\backslash X)$. 
\begin{theorem}\label{main-theorem}
    Let $(X=G/\Gamma,\mu,T)$ a nilsystem. Then $T$ exhibits pure discrete spectrum on $L^2(\mathcal{J}([\tau,G],\Gamma)\backslash X)$. 
\end{theorem}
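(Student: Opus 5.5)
We pass to the factor. Set $J:=\mathcal{J}([\tau,G],\Gamma)$. By the preliminaries, $J$ is a closed, connected, rational subgroup of $G^\circ$ that is normal in $G$, and $J\Gamma=\overline{[\tau,G]\Gamma}$. Hence $Y:=G/J\Gamma$ is a nilmanifold, isomorphic to $(G/J)/(\Gamma J/J)$, and $L^2(J\backslash X)$ is identified with $L^2(Y,\mu_Y)$. By \cref{connectedness-quotient}, $(G/J)^\circ=G^\circ/J$ is connected and simply connected. So it suffices to show that $T_{\bar\tau}$ on $Y$ has pure discrete spectrum, where $\bar\tau=\tau J$.

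The key structural fact is that in $\bar G:=G/J$ the element $\bar\tau$ is central, because $[\tau,G]\subseteq J$. Therefore $T_{\bar\tau}$ commutes with the action of all of $\bar G$, and in particular with the action of the closed abelian group $L:=\overline{\langle\bar\tau\rangle\bar\Gamma}$-type object. I would make this precise as follows. Take $L$ to be the smallest rational subgroup of $Z(\bar G)$ containing $\bar\tau$; for instance, $L=\overline{\langle \bar\tau\rangle (Z(\bar G)\cap\bar\Gamma)}$ refined to a rational closed subgroup of the center. The center $Z(\bar G)$ is rational, and its intersection with $\bar\Gamma$ is cocompact in it, so such an $L$ exists and is central and rational. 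Apply \cref{PW} to $L$. This gives
\[
L^2(Y)=\bigoplus_{\chi\in\widehat{L/(L\cap\bar\Gamma)}}V_\chi .
\]
Each $f\in V_\chi$ satisfies $f(\bar\tau y)=\chi(\bar\tau)f(y)$, so $T_{\bar\tau}f=\chi(\bar\tau)f$ and every $V_\chi$ consists of eigenfunctions. The dual group is countable, so $L^2(Y)$ is the closed span of eigenfunctions. This is exactly pure discrete spectrum.

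The main obstacle is ensuring that a central rational closed subgroup containing $\bar\tau$ exists. The issue is that $\bar\tau$ need not lie in $\bar G^\circ$, so the closure of $\langle\bar\tau\rangle(L\cap\bar\Gamma)$ might fail to be of the form required in \cref{PW}. I would handle this in one of two ways. The first is to use that $Z(\bar G)$ itself is rational. This holds because $\bar\Gamma$ is cocompact, the center of a nilpotent Lie group with lattice meets the lattice cocompactly (Mal'cev), and the identity $[\bar\tau,\bar G]=1$ shows that $\bar\tau$ commutes with the lattice. Then take $L=Z(\bar G)$, which is compact modulo $L\cap\bar\Gamma$. The second is to avoid \cref{PW} entirely. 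In that approach, view $T_{\bar\tau}$ as the restriction to the subgroup $\{\bar\tau^n\}$ of the unitary representation of the compact abelian group $K:=\overline{\{T_{\bar\tau}^n\}}$, taken in the group of homeomorphisms of $Y$ commuting with left $\bar G$-translations (translations by central elements act isometrically). A unitary representation of a compact abelian group decomposes into characters, which again gives a basis of eigenfunctions. I would pursue the second route as a fallback if the rationality of the center proves awkward in the disconnected setting.
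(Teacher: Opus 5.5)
Your proposal is correct and is essentially the paper's proof: pass to $G/\mathcal{J}([\tau,G],\Gamma)$, where $\bar\tau$ becomes central, apply \cref{PW} with $L=Z(\bar G)$, and observe that each $V_\chi$ consists of eigenfunctions with eigenvalue $\chi(\bar\tau)$ (the paper expresses the same fact through $\widehat{\sigma_f}(n)=\chi(\tau)^n\int|f|^2\,d\mu$). The paper does not address the rationality of $Z(\bar G)$ at all, and your one-line justification of it does not cover the $\bar\tau$-direction, which needs some $\bar\tau^k z\in\bar\Gamma$ with $k\neq 0$ and $z\in Z(\bar G^\circ)$; your fallback, taking the closure of $\{T_{\bar\tau}^n\}$ in the isometries of $Y$ and decomposing under that compact abelian group, avoids this issue altogether.
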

\begin{proof}
    By replacing $G$ by $G/\mathcal{J}([\tau,G],\Gamma)$ and $\Gamma$ by $ (\mathcal{J}([\tau,G],\Gamma)\Gamma/ (\mathcal{J}([\tau,G],\Gamma))$, we just need to prove that if $\tau\in Z(G)$, then $(X=G/\Gamma,\mu,T)$ exhibits pure discrete spectrum. Using \cref{PW} with $L=Z(G)$, we obtain the decomposition 
    $$ L^2(X)= \bigoplus_{\chi \in \widehat{J}} V_\chi,$$
    where $J=Z(G)/ (Z(G)\cap \Gamma)$ and for each character $\chi$ in $Z(G)$ with $\chi(Z(G)\cap \Gamma)\equiv 1$, $V_\chi$ is defined as in \cref{def-V-chi}. For $f\in V_\chi$, we have that the Fourier coefficients of its spectral measure $\sigma_f$ are given by 
    $$\widehat{\sigma_f}(n)=\int \overline{f}\cdot  T^n f d\mu= \chi(\tau)^n \int |f|^2 d\mu,$$
    which are the Fourier coefficients of a discrete measure, whence the result follows. 
\end{proof}

\subsection{Weak mixing and nonuniform component}
As we mentioned previously, it is not possible to use similar techniques to the ones of \cite{MR4797105} to show that the weak mixing nonuniform component exhibits pure infinite Lebesgue spectrum.
The main issue is that there may not be a nontrivial element of the form $[\tau,g]$ for $g\in G^\circ$ lying in the center $Z(G)$ of $G$, which was crucial in \cite{MR4797105} and for us in \cref{Spectrum-uniform-part} and \cref{main-theorem} in order to use \cref{PW}.
\begin{example}
 Let
\[
G = \left\{ \begin{pmatrix}
1 & x & y & z  \\
0 & 1 & u & v \\
0 & 0 & 1 & w \\
0 & 0 & 0 & 1
\end{pmatrix} \mid x, y, z,u,v,w \in \R \right\}
\]
be the group of $4\times 4$ upper triangular matrices with $1$s in the diagonal, with the usual matrix multiplication. Then \( (G, \cdot) \) is a Lie group, known as the $4$-dimensional Heisenberg group. We consider a Heisenberg system given by the triplet \((X=G / \Gamma, m_X, T)\), where \(\Gamma=G \cap M_{4}(\Z)\) is a discrete cocompact subgroup of \(G\), \( m_X \) denotes the Haar measure on the quotient space \( G / \Gamma \), and \( T \) represents the transformation induced by left multiplication by the element
   $$\tau =  \begin{pmatrix}
1 & 0 & y_\tau & 0 \\
0 & 1 & u_\tau & 0 \\
0 & 0 & 1 & 0 \\
0 & 0 & 0 & 1
\end{pmatrix},$$
with $y_\tau,u_\tau\in(0,1)$. We observe that 
$$Z(G)= \left\{ \begin{pmatrix}
1 & 0 & 0 & z  \\
0 & 1 & 0 & 0 \\
0 & 0 & 1 & 0 \\
0 & 0 & 0 & 1
\end{pmatrix} \mid z \in \R \right\}.$$
We have that if
$$g=\begin{pmatrix}
1 & x & y & z  \\
0 & 1 & u & v \\
0 & 0 & 1 & w \\
0 & 0 & 0 & 1
\end{pmatrix} \in G, \text{ then } [\tau,g]= \begin{pmatrix}
1 & 0 & -u_\tau x & w(u_\tau x+ y_\tau)  \\
0 & 1 & 0 & u_\tau w \\
0 & 0 & 1 & 0 \\
0 & 0 & 0 & 1
\end{pmatrix}. $$
It follows that $[\tau,g]\in Z(G)$ if and only if $x=w=0$, which would imply that $[\tau,g]=e_G$, showing that $\{[\tau,g] : g\in G\}\cap Z(G)= \{e_G\}$. 
\end{example}
Thus, we need a different strategy to tackle the weak mixing nonuniform case: we will use the existence of Mal'cev bases of connected simply connected nilpotent Lie groups. For this, we will need the following propositions. 
\begin{proposition}\label{app-coarea-formula}
    Let $f\in \mathcal{C}^1(\R^d,\R)$ be such that the set $\{x\in \R^d : \nabla f (x)=0\}$ has zero Lebesgue measure. Then the pushforward $f^*(m_{\R^d})$ of the Lebesgue measure is absolutely continuous with respect the Lebesgue measure $m_{\R}$ on $\R$.
\end{proposition}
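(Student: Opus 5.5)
The plan is to combine the coarea formula with the implicit function theorem applied locally on the open set where the gradient is nonzero. Note first that the pushforward $f^*(m_{\R^d})$ need not be a finite, or even $\sigma$-finite, measure. Absolute continuity is still meaningful here: it says that $m_{\R^d}(f^{-1}(A))=0$ whenever $m_\R(A)=0$. So fix a Borel set $A\subseteq\R$ with $m_\R(A)=0$; we must show $m_{\R^d}(f^{-1}(A))=0$.

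Let $U=\{x\in\R^d:\nabla f(x)\neq 0\}$. Since $f\in\mathcal{C}^1$, $U$ is open, and by hypothesis its complement is $m_{\R^d}$-null. It therefore suffices to show that $f^{-1}(A)\cap U$ is null.

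Write $U$ as a countable union of open sets $W$ on which some fixed partial derivative $\partial_i f$ does not vanish. By the local inverse function theorem, we may shrink each $W$ further, still keeping a countable family by Lindelöf, so that the map
$$\Phi(x)=(x_1,\ldots,x_{i-1},f(x),x_{i+1},\ldots,x_d)$$
is a $\mathcal{C}^1$ diffeomorphism from $W$ onto an open set $\Phi(W)$. On $W$ we then have
$$f^{-1}(A)\cap W=\Phi^{-1}\bigl(\Phi(W)\cap(\R^{i-1}\times A\times\R^{d-i})\bigr).$$
The set $\R^{i-1}\times A\times\R^{d-i}$ is $m_{\R^d}$-null by Fubini. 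The inverse $\Phi^{-1}$ is $\mathcal{C}^1$, hence locally Lipschitz, so it maps null sets to null sets. This is the change of variables formula: after shrinking $W$ to relatively compact pieces, the Jacobian $|\det D\Phi^{-1}|$ is bounded. It follows that $f^{-1}(A)\cap W$ is null, and a countable union over the pieces finishes the argument.

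The only delicate point is the bookkeeping of the countable cover. To arrange it, take the pieces to be small balls with rational centers and radii, each contained in a larger ball on which $\Phi$ is a diffeomorphism with a bounded inverse Jacobian. Alternatively, one can quote the coarea formula directly,
$$\int_{\R^d}\mathbbm{1}_A(f(x))\,|\nabla f(x)|\,dx=\int_A \mathcal{H}^{d-1}(f^{-1}(t))\,dt=0.$$
This gives $\mathbbm{1}_A\circ f=0$ almost everywhere on $U$, and hence the same conclusion.
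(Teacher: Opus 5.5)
Your argument is correct, but your main route differs from the paper's.

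\textbf{The paper's route.} The paper covers $\{\nabla f\neq 0\}$ by countably many balls $B$ compactly contained in it. On each such ball, $f$ is Lipschitz and $|\nabla f|$ is bounded away from zero. It then applies the coarea formula with $g=\ind{B}\,(\ind{A}\circ f)/|\nabla f|$, which gives $\int_B \ind{A}(f(x))\,dx=\int_A \mathcal{H}^{d-1}(f^{-1}(t)\cap B)\,dt=0$. This is essentially your closing alternative. Your choice $g=\ind{A}\circ f$ even avoids the division and the lower bound on $|\nabla f|$, at the cost of applying coarea to a merely locally Lipschitz function, which is handled by localizing.

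\textbf{Your route.} Your primary argument instead straightens $f$ locally. Where $\partial_i f\neq 0$, the map $\Phi$ is a local $\mathcal{C}^1$ diffeomorphism, since its Jacobian determinant is exactly $\partial_i f$. Under $\Phi$, the set $f^{-1}(A)$ becomes a slab $\R^{i-1}\times A\times\R^{d-i}$, which is null by Fubini. Finally, $\Phi^{-1}$ preserves null sets.

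\textbf{Comparison.} Your route is more elementary. It uses only the inverse function theorem, Fubini, and the fact that locally Lipschitz maps send null sets to null sets, with no Hausdorff measure, and it uses exactly the $\mathcal{C}^1$ hypothesis. The coarea route is more robust: in the form you wrote at the end, it proves the statement for any locally Lipschitz $f$ whose a.e.-defined gradient is nonzero a.e., where no inverse function theorem is available.

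Your observation that $f_*m_{\R^d}$ is typically not $\sigma$-finite is accurate. The statement should therefore be read as $m_\R(A)=0\Rightarrow m_{\R^d}(f^{-1}(A))=0$, which is precisely what both proofs establish. The bounded-Jacobian remark is unnecessary but harmless: change of variables over a null set already gives zero, as does the local Lipschitz property alone.
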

We were unable to find a proper source for this proposition, so we provide a proof for the sake of completeness. 
\begin{proof}
    Let $A\subseteq \R$ be a set of zero Lebesgue measure. We want to see that 
    $$\int_{\R^d} \ind{ A}(f(x)) dm_{\R^d}(x)=0. $$
    Since $f\in \mathcal{C}^1(\R^d,\R)$, the set  $(\nabla f)^{-1}(\{0\})$ is closed. Thus, we can cover its complement with countably many open balls compactly contained therein. Let $B$ be one of these balls. It is enough to prove that 
    $$ \int_B \ind{A}(f(x)) dm_{\R^d}(x)=0.$$
    The function $f|_B: B \to \R$ is Lipschitz as $f$ is $C^1$ and $\overline{B}$ is compact in $\R^d$. In addition, $|\nabla f|$ restricted to $B$ is bounded and bounded away from zero. By the Coarea formula, if we denote $H_{d-1}$ the $(d-1)$-dimensional Hausdorff measure, we have that for all $g\in L^1(B,m_{\R^d})$,
    $$\int_B g(x) |\nabla f(x)| dm_{\R^d}(x) = \int_{\R} \left( \int_{f^{-1}(t)} g(x) d H_{d-1}(x) \right) dm_{\R}(t).  $$
    Consider the $L^1(B,m_{\R^d})$ function $g(x)= \ind{B}(x) \ind{A}(f(x)) \cdot /|\nabla f (x)|$. Then, we have that 
    \begin{align*}
      \int_B \ind{A}(f(x)) dm_{\R^d}(x)&= \int_{\R} \left( \int_{f^{-1}(t)} \ind{B}(x) \ind{A}(f(x)) dH_{d-1} (x) \right) dm_{\R}(t)\\
      &=  \int_{A}  H_{d-1}(f^{-1}(t) \cap B) dm_{\R}(t)=0
    \end{align*}
where we used that $A$ has zero Lebesgue measure, finishing the proof. 
\end{proof}
\begin{proposition}\label{tj-explicit}
    Let $(X=G/\Gamma,\mu,T=T_\tau)$ be a nilsystem of order $1$, and $H$ its Leibman group. Let $Y=G/H\Gamma$ be the factor defined by the invariant sigma algebra $\mathcal{I}(T)$ of $X$ and $(\mu_y)_{y\in Y}$ the ergodic desintegration given by it. Let $\eta_{G}: \R^d \to G^\circ$ be the Mal'cev coordinates of $G^\circ$ adapted to $H^\circ$. Let $(\chi_j)_{j\in \N}\subseteq \widehat{H}$ be an orthonormal basis of $L^2(H/H\cap \Gamma)$. Then, there is $l\leq d$ such that for each $j\in \N$ the function $t_j:Y\to \T$ defined for $\mu_Y$-a.e. $y=gH\Gamma$ with $g\in \eta_G(\{0\}^{d-l}\times [0,1)^l)$, by $$e(t_j(y))= \chi_j(g^{-1}\tau g)$$
    is a Borel map such that  for $\mu_Y$-a.e. $y\in Y$, $\mu_y(\{e(t_j(y))\}_{j\in \N})=1$. 
\end{proposition}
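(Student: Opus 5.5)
The plan is to identify each ergodic component of $\mu$ explicitly as a rotation on a compact abelian group. Its eigenvalues can then be read off from the characters $\chi_j$. I read the conclusion $\mu_y(\{e(t_j(y))\}_{j\in\N})=1$ as saying that the maximal spectral type of $T$ on $L^2(X,\mu_y)$ is carried by the countable set $\{e(t_j(y))\}_{j}$.

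\textbf{Step 1 (reduction to a rotation on a compact abelian group).} Since $X$ has order $1$, every $f\in L^2(X)$ is $\mathcal{Z}_1$-measurable. By \cref{Theo-A-part-2} every such $f$ is therefore $H_2$-invariant, so $[H,H]$ acts trivially on $X$. After passing to $G/[H,H]$ (rational and normal by \cref{first-part-theo-A}), we may assume $H$ is abelian. By \cref{first-part-theo-A}, for $m_{G^\circ}$-a.e.\ $g$ the ergodic component through $g\Gamma$ is $gH\Gamma$ with its Haar measure, so this is $\mu_y$ for $y=gH\Gamma$. The map $h(H\cap\Gamma)\mapsto gh\Gamma$ identifies $H/(H\cap\Gamma)$ with $gH\Gamma$, because $gh\Gamma=gh'\Gamma$ iff $h^{-1}h'\in H\cap\Gamma$. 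Since $H$ is normal, $g^{-1}\tau g\in H$, and under this identification
\[
T(gh\Gamma)=g\,(g^{-1}\tau g)h\,\Gamma .
\]
So $T$ on $(gH\Gamma,\mu_y)$ is the rotation by $g^{-1}\tau g$ on the compact abelian group $H/(H\cap\Gamma)$.

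\textbf{Step 2 (spectrum of each component).} For each $j$, the function $f_j(gh\Gamma):=\chi_j(h)$ is an eigenfunction with eigenvalue $\chi_j(g^{-1}\tau g)$. The $f_j$ form an orthonormal basis of $L^2(\mu_y)$. Hence every spectral measure on $L^2(\mu_y)$ is supported on $\{\chi_j(g^{-1}\tau g)\}_j$, which is the required statement once $e(t_j(y))=\chi_j(g^{-1}\tau g)$ is well defined.

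\textbf{Step 3 (choice of representative and measurability).} This is the step I expect to be the main obstacle. The value $\chi_j(g^{-1}\tau g)$ depends on the representative $g$. Replacing $g$ by $gh$ with $h\in H$ is harmless because $H$ is abelian. Replacing $g$ by $g\gamma$ with $\gamma\in\Gamma$, however, changes the eigenvalue to $\chi_j(\gamma^{-1}(g^{-1}\tau g)\gamma)$, which need not equal the original value. Only the set of eigenvalues is canonical, since conjugation by $\gamma$ permutes the characters. This is why the statement fixes a section.

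\begin{enumerate}
\item Choose Mal'cev coordinates $\eta_G$ of $G^\circ$ (via \cref{malcev} applied to $G^\circ$) adapted to $H^\circ$, with the $H^\circ$-coordinates first.
\item By \cref{connectedness-quotient}, $G^\circ/H^\circ$ is connected and simply connected, so the remaining $l$ coordinates parametrize $G^\circ/H^\circ$.
\item Since $\eta_G(\Z^d)=\Gamma\cap G^\circ$ and $H$ is rational, the cube $\eta_G(\{0\}^{d-l}\times[0,1)^l)$ maps onto the component $G^\circ H\Gamma/H\Gamma$ of $Y$. I would verify that, off a $\mu_Y$-null set, it does so injectively or at least with a Borel choice. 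Points of $Y$ off this component are handled by the finitely many translates $\tau^i$, or by the standing reduction to $X_0$.
\end{enumerate}

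With the section fixed, $y\mapsto g(y)$ is Borel, being the inverse of a continuous injection on a Borel set. The map $g\mapsto\chi_j(g^{-1}\tau g)$ is continuous, so $t_j$ is Borel. Combined with Step 2, this gives the proposition for $\mu_Y$-a.e.\ $y$, excluding the null set from \cref{first-part-theo-A}.
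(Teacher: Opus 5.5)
Your proposal is correct and follows essentially the paper's argument. You identify the ergodic component $gH\Gamma$ with the rotation by $g^{-1}\tau g$ on $H/(H\cap\Gamma)$; the paper phrases this via $H/(H\cap g\Gamma g^{-1})$ and the conjugated characters $\chi_j^g(h)=\chi_j(g^{-1}hg)$, which amounts to the same thing. You then read off the eigenvalues $\chi_j(g^{-1}\tau g)$ and obtain Borel measurability by inverting the continuous map $x\mapsto\eta_G(x)H\Gamma$ on the Mal'cev cube (Lusin--Souslin), as the paper does.

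Your caution in Step 3 is well placed. The paper simply asserts that this map is a bijection, but it is in general only finite-to-one, since $G^\circ\cap H\Gamma$ can be strictly larger than $H^\circ(\Gamma\cap G^\circ)$. A Borel selection of representatives is enough precisely because, as you observe, the set $\{\chi_j(g^{-1}\tau g)\}_j$ does not depend on the representative.
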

\begin{proof}
Since $G$ is generated by $G^\circ$ and $\tau\in H$, we have that $G/H$ is the image of $G^\circ$ under the continuous projection $g\in G\mapsto  gH \in G/H$. Thus, the nilpotent Lie group $G/H\cong G^\circ /(G^\circ \cap H)$ is connected. Moreover, as $\tau$ generates a free abelian group with trivial intersection with $G^\circ$, we have that $G^\circ \cap H= H^\circ$. Thus, the nilmanifold $$Y=(G/H)/(H\Gamma/H)\cong(G^\circ /H^\circ)/(H^\circ (\Gamma\cap G^\circ)/H^\circ)$$ is a connected nilmanifold of dimension $l=\text{dim}(G^\circ) - \text{dim}(H^\circ)$ with $(G^\circ /H^\circ)$ connected simply connected by \cref{connectedness-quotient}. We choose a Mal'cev basis in $G^\circ$ adapted to $H^\circ$, meaning that the fundamental domain of $H^\circ$ is identified with $[0,1)^{d-l}\times \{0\}^l$. Thus, the fundamental domain of $G^\circ/H^\circ$ is identified with $F=\{0\}^{d-l}\times [0,1)^l$. Let $K:=\eta_G(F)$. In particular, $\mu_Y$ is the image of $\ind{K} m_G$ through $\eta_G$ and $\psi : F\to Y$ given by $\psi( x) = \eta_G(x)H\Gamma, $ is continuous and bijective between Polish spaces. By Lusin–Souslin's theorem, its inverse $\psi^{-1}:Y\to F$ is Borel. Therefore, we get
$$e(t_j(y))= \chi_j( (\psi(y))^{-1}\tau \psi(y)) $$
which is Borel, and thus so is $t_j$.

To finish, we need to show that $\mu_Y$-a.e. $y\in Y$, $\mu_y(\bS^1\setminus \{e(t_j(y))\}_{j\in \N})=0$. For this, we observe that for $j\in \N$, the function $\chi_j^g(h) =\chi_j(g^{-1}hg) $ is such that $\chi_j^g(H\cap g\Gamma g^{-1})\equiv 1$. Since $(\chi_j)_{j\in \N}$ forms an orthonormal basis of $L^2(H/H\cap \Gamma)$, it follows that $(\chi_j^g)_{j\in \N}$ forms an orthonormal basis of characters of $L^2(H/(H\cap g\Gamma g^{-1}))$. Since for $\mu_Y$-a.e. $y\in Y$ there is $g\in G^\circ$ such that $y=gH\Gamma=\overline{\mathrm{Orb}}_{T}(g\Gamma)$ which is isomorphic to $H/(H\cap g\Gamma g^{-1})$, we have that the eigenvalues of $T$ in $H/(H\cap g\Gamma g^{-1})$ are the atoms in which the discrete measure $\mu_y$ is supported. These eigenvalues are given by
$$\{\chi_j^g(\tau) \}_{j\in \N} = \{\chi_j(g^{-1}\tau g) \}_{j\in \N}= \{ e(t_j(y)) \}_{j\in \N},$$
concluding the result. 
\end{proof}
We will now recall some definitions from \cite[Section 5]{FRANTZIKINAKIS_HOST_2018}. We remark that we treat spectral measures as measures on $\bS^1$, while in \cite[Section 5]{FRANTZIKINAKIS_HOST_2018} they are treated as measures in $\T$, whence the necessity of composing the functions $t_j$ with the natural identification $x\in \T\mapsto e(x)\in \bS^1$ in \cref{tj-explicit}.
\begin{definition}\label{localization-lemma}
    Let $(X,\mu,T)$ be a measure-preserving system. A relative orthonormal system (with respect to the $T$-invariant $\sigma$-algebra $\mathcal{I}(T)$) is a countable family $(\phi_j)_{j\in \N}$ of functions belonging to $L^2(\mu)$ such that: 
    \begin{enumerate}
        \item  for every $j \in \mathbb{N}$, for $\mu$-almost every $x\in X$, $ \mathbb{E}(\left|\phi_j\right|^2 \mid \mathcal{I}(T))(x)$ has value zero or one; and
        \item for all $j, k \in \mathbb{N}$ with $j \neq k$, for $\mu$-almost every $x\in X$, $ \mathbb{E}(\phi_j \overline{\phi_k} \mid \mathcal{I}(T))(x)=0.$  
    \end{enumerate}
The family $\left(\phi_j\right)_{j \in \mathbb{N}}$ is a relative orthonormal basis if it also satisfies:
\begin{enumerate}
\item[\textbf{3.}] the linear space spanned by all functions of the form $\phi_j \psi$, is dense in $L^2(\mu)$, where $j \in \mathbb{N}$ and $\psi \in L^{\infty}(\mu)$ varies over all $T$-invariant functions.
    \end{enumerate}
\end{definition}

\begin{definition}
Let $(X,\mu,T)$ be a measure-preserving system. Let $\lambda \in L^{\infty}(\mu)$ be a $T$-invariant function and $\phi \in L^{\infty}(\mu)$. We say that $\phi$ is an eigenfunction with eigenvalue $\lambda$ if:
  \begin{enumerate}
      \item  for $\mu$-almost every $x \in X$, $|\phi(x)|$ has value zero or one;
\item  for $\mu$-almost every $x \in X$ such that $\phi(x)=0$, $\lambda(x)=0$; and
\item  for $\mu$-almost every $x\in X$, $\phi \circ T(x)=\lambda \cdot \phi(x)$.
  \end{enumerate}
\end{definition}
The following lemma is an immediate consequence of the proof of \cite[Lemma 5.8]{FRANTZIKINAKIS_HOST_2018} and \cite[Theorem 5.2]{FRANTZIKINAKIS_HOST_2018}. Hence, we omit the proof of \cref{relative-basis} as the argument is identical, except that the use of \cite[Proposition 5.6]{FRANTZIKINAKIS_HOST_2018} in the proof of \cite[Lemma 5.8]{FRANTZIKINAKIS_HOST_2018} is omitted and instead assumed as a hypothesis that the sequence $\{t_j\}_{j\in \N}$ constructed there is given.

\begin{lemma}\label{relative-basis}
    Let $(X,\mu,T)$ be a system of order one and $f\in L^2(\mu)$. Let $(Y,\mathcal{Y},\nu,T)$ be the factor defined by the invariant sigma algebra $\mathcal{I}(T)$, where $(\mu_y)_{y\in \N}$ denotes the ergodic decomposition given by this factor. If there are Borel maps $t_j:Y\to \T$ for $j\in \N$, such that for $\mu_Y$-a.e. $y\in Y$, the Haar measure $\mu_y$ on $\overline{\mathrm{Orb}}_{T}(y)$ is supported in $\{t_j(y)\}_{j\in \N}$, then there exists a relative orthonormal basis of eigenfunctions $(\phi_j)_{j\in\N}$ with eigenvalues $\lambda_j(y) \in\{\ind{A_i}(y) e(t_i(y))\}_{i\in \N}$ where 
    $$A_i=\{y\in Y : \mu_y(\{e(t_i(y))\})>0 \}. $$
\end{lemma}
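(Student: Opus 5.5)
We follow the proofs of \cite[Lemma 5.8]{FRANTZIKINAKIS_HOST_2018} and \cite[Theorem 5.2]{FRANTZIKINAKIS_HOST_2018}. The measurable selection of candidate eigenvalues, which there comes from \cite[Proposition 5.6]{FRANTZIKINAKIS_HOST_2018}, is replaced by the given Borel maps $t_j$.

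\textbf{Step 1: spectral projections.} For each $i\in\N$, consider the $T$-invariant function $\lambda_i(y):=\ind{A_i}(y)e(t_i(y))$. We first check that $A_i$ is Borel. The map $y\mapsto \mu_y(\{e(t_i(y))\})$ can be written as
$$\lim_{N\to\infty}\frac1N\sum_{n<N}\overline{\lambda_i}(y)^n\,\E_{\mu_y}(\overline{f}\,T^nf)$$
for $f$ ranging over a countable dense family. This is a limit of Borel functions, so it is Borel.

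For $f\in L^2(\mu)$, define the relative projection onto the eigenspace with eigenvalue $\lambda_i$ by
$$P_if:=\lim_{N\to\infty}\frac1N\sum_{n<N}\overline{\lambda_i}^{\,n}\,T^nf.$$
This limit exists in $L^2$. It does so fiberwise by von Neumann's theorem applied in each $L^2(\mu_y)$, and globally by dominated convergence. Moreover, $T(P_if)=\lambda_iP_if$. Since $(X,\mu,T)$ has order one, $\mu_y$-a.e.\ fiber has discrete spectrum. By hypothesis, its eigenvalues lie in $\{e(t_j(y))\}_j$. Hence $\sum_i P_i=\mathrm{Id}$ after removing duplicate indices. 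To remove them, replace $A_i$ by $A_i\setminus\bigcup_{j<i}\{y: t_j(y)=t_i(y)\}$; this set is still Borel.

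\textbf{Step 2: normalized eigenfunctions.} On the set $A_i$, each eigenspace of an ergodic fiber is one-dimensional. Take a countable dense sequence $(f_k)$ in $L^2(\mu)$. Set $\phi_i:=P_if_{k(y)}/|P_if_{k(y)}|$, where $k(y)$ is the least $k$ such that $\E(|P_if_k|^2\mid\mathcal{I})(y)>0$, and set $\phi_i:=0$ where no such $k$ exists. This choice is measurable because $k(y)$ is a first-hitting index of Borel conditions. Then $|\phi_i|\in\{0,1\}$, $\phi_i\circ T=\lambda_i\phi_i$, and $\phi_i=0$ exactly where $\lambda_i=0$, up to null sets.

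\textbf{Step 3: relative orthonormal basis.} Relative orthogonality $\E(\phi_i\overline{\phi_j}\mid\mathcal{I})=0$ for $i\ne j$ holds fiberwise, since these are eigenfunctions with distinct eigenvalues of an ergodic system. The normalization $\E(|\phi_i|^2\mid\mathcal{I})\in\{0,1\}$ holds because $|\phi_i|$ is $0$ or $1$ and is constant along fibers. For density, note that $P_if=\E(f\overline{\phi_i}\mid\mathcal{I})\,\phi_i$ by one-dimensionality. Therefore $f=\sum_iP_if$ lies in the closed span of the functions $\phi_i\psi$ with $\psi$ invariant. Truncating $\E(f\overline{\phi_i}\mid\mathcal{I})$ to $L^\infty$ gives bounded coefficients $\psi$, as required in the definition of a relative orthonormal basis.

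\textbf{Main obstacle.} The delicate point is measurability: of the sets $A_i$, of the deduplication, and of the selection of $\phi_i$. In \cite{FRANTZIKINAKIS_HOST_2018} this is what \cite[Proposition 5.6]{FRANTZIKINAKIS_HOST_2018} handles. Here it reduces to the Borel measurability of the $t_j$, which is exactly what \cref{tj-explicit} provides. For $A_i$ we use the ergodic-average formula for atom masses from Step 1.
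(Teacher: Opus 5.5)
Your proposal is correct and takes the same route as the paper, which omits the proof and defers to the proofs of \cite[Lemma 5.8]{FRANTZIKINAKIS_HOST_2018} and \cite[Theorem 5.2]{FRANTZIKINAKIS_HOST_2018}, with \cite[Proposition 5.6]{FRANTZIKINAKIS_HOST_2018} replaced by the given Borel maps $t_j$; your twisted ergodic averages, measurable normalization, and fiberwise orthogonality and density arguments supply exactly those details. The one slip is in Step 1: the averages defining $A_i$ should be twisted by $e(-nt_i(y))$ rather than $\overline{\lambda_i}^{\,n}$, because $\lambda_i$ already presupposes $A_i$; also, $A_i$ should be taken as the union over $k$ of the sets where $\sigma_{f_k,y}$ has a positive atom at $e(t_i(y))$.
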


Now we prove \cref{Theo-B}, which we restate now. 
\begin{theorem}\label{spectral-decomposition}
Let $(X=G/\Gamma,\mu,T)$ be a nilsystem. Then the Koopman representation of $T$ in $L^2(X,\mu)$ decomposes as
  $$ L^2(X)=L^2(\mathcal{J}([\tau,G],\Gamma)\backslash X)\oplus L^2(\mathcal{J}([\tau,G],\Gamma)\backslash X)^\perp,$$
  where $T$ exhibits discrete spectrum on $L^2(\mathcal{J}([\tau,G],\Gamma)\backslash X)$ and infinite Lebesgue spectrum on $L^2(\mathcal{J}([\tau,G],\Gamma)\backslash X)^\perp$ if nontrivial.
\end{theorem}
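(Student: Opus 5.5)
The plan follows the three-way splitting announced before \cref{Spectrum-uniform-part}. Write $J:=\mathcal{J}([\tau,G],\Gamma)$ and recall that $J$ is normal and rational, and $H$ is the Leibman group. First I check that $[H,H]\subseteq J$, so that $L^2(J\backslash X)\subseteq L^2([H,H]\backslash X)$. Since $H=\langle \tau,H^\circ\rangle$, we have $[H,H]$ generated by $[\tau,H^\circ]\subseteq[\tau,G]$ and $[H^\circ,H^\circ]$. For the second kind of generator I would use minimality of $\tau$ on $H\Gamma$, as in the proof of \cref{Spectrum-uniform-part}: modulo the closed normal group $J\Gamma\supseteq\overline{[\tau,G]\Gamma}$, elements of $H$ are limits of $\tau^{n_i}\gamma_i$, so their commutators reduce to commutators of lattice elements. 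Passing to $G/J$, where $\tau$ is central, this should show that $[H,H]$ maps into the lattice, and by connectedness of $[H^\circ,H^\circ]$ it is trivial there.

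With this in hand,
\[
L^2(X)=L^2(J\backslash X)\oplus\bigl(L^2(J\backslash X)^\perp\cap L^2([H,H]\backslash X)\bigr)\oplus L^2([H,H]\backslash X)^\perp .
\]
The first summand has discrete spectrum by \cref{main-theorem}. The third has infinite Lebesgue spectrum by \cref{Spectrum-uniform-part}. It remains to treat the middle summand $W$, which is $L^2$ of the order-one system $X':=[H,H]\backslash X$ (its Leibman group is $H/[H,H]$) with the $J$-invariant part removed. So I replace $X$ by $X'$ and assume $H$ is abelian and $X$ has order one.

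For the middle piece I apply \cref{tj-explicit} and \cref{relative-basis}. This gives a relative orthonormal basis of eigenfunctions $\phi_j$ with eigenvalues $\lambda_j(y)=\ind{A_j}(y)\,e(t_j(y))$, where $e(t_j(y))=\chi_j(g^{-1}\tau g)$ and $g=\eta_G(s)$ with $s\in\{0\}^{d-l}\times[0,1)^l$. Any $f\in W$ is an $L^2$ limit of sums $\sum_j\phi_j\psi_j$ with $\psi_j$ invariant, and the summands are orthogonal. Hence
\[
\sigma_f=\sum_j\int_Y |\psi_j|^2\,\E(|\phi_j|^2\mid\mathcal I)\,\delta_{\lambda_j(y)}\,d\mu_Y(y),
\]
that is, $\sigma_f$ is a sum of pushforwards of absolutely continuous measures on $Y\cong[0,1)^l$ under $y\mapsto e(t_j(y))$. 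By \cref{app-coarea-formula}, each such pushforward is absolutely continuous provided the map $s\mapsto \chi_j(\eta_G(s)^{-1}\tau\eta_G(s))$ is nonconstant, in the sense that its phase has gradient vanishing only on a null set. The phase is a polynomial in $s$ by the BCH formula \cref{BCH} and the Mal'cev coordinates. A nonconstant polynomial has a critical set that is a proper algebraic subset, hence null. The key point is therefore: if $\chi_j(g^{-1}\tau g)$ is constant in $g$, then $\chi_j([g^{-1},\tau])=1$ for all $g$, so $\chi_j$ kills $[\tau,G]$. By continuity and rationality it then kills $J\cap H$, and the corresponding $\phi_j\psi_j$ is $J$-invariant and hence orthogonal to $W$. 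This identification of "constant eigenvalue" with "$J$-invariance" is the step I expect to be the main obstacle. It requires relating characters of $H/(H\cap\Gamma)$ vanishing on $[\tau,G]$ to functions on $G/J\Gamma$, likely via \cref{PW} applied to $H$ (central after the reduction, since $[\tau,G]$ becomes central in a 2-step quotient) and the characterization of $J\Gamma=\overline{[\tau,G]\Gamma}$.

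Finally, infinite multiplicity. Infinitely many $j$ have nonconstant phase whenever $W\neq0$: if one character $\chi$ works, so do its powers $\chi^n$, since $n\cdot$(nonconstant polynomial) is still nonconstant. Each $\phi_j\psi_j$ with $\psi_j$ ranging over invariant functions gives an invariant subspace. To upgrade absolute continuity to the full Lebesgue type on each one, I would show that the pushforward of $\mu_Y$ under a nonconstant polynomial phase has support containing an interval. I would then use the freedom to rotate: composing with powers of $\chi_j$ and with translations in $Y$, apply \cref{Lemma-measure-in-circle}, or more simply take the countable direct sum over $j$ and group the summands into infinitely many families whose combined types are equivalent to $m_{\bS^1}$.
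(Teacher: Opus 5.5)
Up to the multiplicity step your argument is the paper's. It uses the same three-way splitting, the same reduction to the order-one system $[H,H]\backslash X$, the relative eigenbasis from \cref{relative-basis} with the $t_j$ of \cref{tj-explicit}, and the coarea argument (\cref{app-coarea-formula}) for a nonconstant polynomial phase. Your extra check $[H,H]\subseteq J$ is correct and harmless.

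For the step you flag, your formulation is the right one: constant phase gives $\chi_j|_{[\tau,G]}\equiv 1$, hence $\chi_j|_J\equiv1$, hence $\phi_j$ is $J$-invariant. It is in fact more careful than the paper's shortcut, which claims $\lambda_j$ then takes only the values $0,1$; that fails when $\chi_j(\tau)\neq 1$. But \cref{PW} applied to $H$ is not the tool. After quotienting by $[H,H]$, $H$ is abelian but not central, because $\tau\in H$ and $[\tau,G]\neq\{e_G\}$. What you need instead is this. On a.e. component $gH\Gamma\cong H/(H\cap\Gamma)$ (via $h\mapsto gh\Gamma$), $T$ is the ergodic rotation by $g^{-1}\tau g$, which has simple spectrum. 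So $\phi_j$ restricted there is a unimodular multiple of $gh\Gamma\mapsto\chi_j(h)$. Normality of $J$ and $\chi_j|_J\equiv 1$ then give $J$-invariance.

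The genuine gap is the infinite-multiplicity step. You propose to show that the pushforward of $\mu_Y$ under the phase is absolutely continuous with support containing an interval. That does not give equivalence with Lebesgue measure on any arc, since an absolutely continuous density can vanish on a positive-measure nowhere dense closed set inside its support. So the ``grouping into families of full type'' does not follow. Moreover, a single $E_j$ need not have full Lebesgue type at all: its type lives on the arc swept by the phase, which can be short.

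The rotation option also fails. Translating by $g$ rotates spectral measures only when $[\tau,g]$ acts on the relevant subspace by a scalar, which is what centrality provides. In the order-one reduction the commutators $[\tau,g]$ need not be central (this is exactly the $4\times 4$ example in the paper), so \cref{Lemma-measure-in-circle} has no input.

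The paper leaves the relative-eigenfunction picture for this step. It passes to the factor $G/G_s\Gamma$ with $s$ minimal such that $J\not\subseteq G_s$; there $J$ becomes a nontrivial connected subgroup of $Z(G)$. Then \cref{PW} with $L=Z(G)$ gives infinitely many $V_\chi$ with $\chi|_J\not\equiv 1$, all orthogonal to $L^2(J\backslash X)$. Each $V_\chi$ is preserved by $T_g$, which rotates spectral measures by $\chi([\tau,g])$. These rotations fill $\bS^1$, so \cref{Lemma-measure-in-circle} gives full Lebesgue type on each $V_\chi$.

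Alternatively, your own route can be completed with a stronger analytic input. A nonconstant polynomial $p$ has only finitely many critical values. By the coarea formula, the pushforward of Lebesgue measure on $(0,1)^l$ under $p$ is therefore \emph{equivalent} to Lebesgue measure on the interval $p((0,1)^l)$. The phase of $\chi_j^n$ is $np_j$ plus a constant, so for all large $n$ this interval has length at least $1$. Then the pairwise orthogonal spaces $E_{\chi_j^n}\subseteq W$ each have full Lebesgue type, which gives infinite multiplicity.
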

\begin{remark}\label{theo-B-erg}
When $(X=G/\Gamma,\mu,T=T_\tau)$ is ergodic, \cref{Theo-B} implies the known result that $L^2([G,G]/X)$ is the Kronecker factor of $(X=G/\Gamma,\mu,T=T_\tau)$. Indeed, we claim $\mathcal{J}([\tau,G],\Gamma)=[G,G]$. The inclusion $\mathcal{J}([\tau,G],\Gamma)
\subseteq [G,G]$ is direct from the fact that $[G,G]$ is connected, rational, and normal, and also contains $[\tau,G]$. For the other inclusion, observe that it is enough to prove that in $G/\mathcal{J}([\tau,G],\Gamma)$, the group $[G,G]$ is trivial, or equivalently $[G^{\circ},G^{\circ}]=\{e_G\}$. So, let us assume that $[\tau,G]=\{e_G\}$. Take $g,h\in G^{\circ}$. By minimality, there are sequences $(n_i)_{i\in \N},(m_i)_{i\in \N}\subseteq \N$ and $(\gamma_i)_{i\in \N}, (\xi_{i})_{i\in \N}\subseteq \Gamma \cap G^{\circ}$ such that $\tau^{n_i}\gamma_i\to g$ and $\tau^{m_i}\xi_i\to h$ as $i\to \infty.$ Then, we see that by the continuity of the brackets $[\cdot,\cdot]$, $[g,h]$ is the limit as $i\to \infty$ of $[\tau^{n_i}\gamma_i,\tau^{m_i}\xi_i]= [\gamma_i,\xi_i]\subseteq \Gamma. $ As $\Gamma$ is closed, we conclude that $[G^{\circ},G^{\circ}]\subseteq \Gamma$. As $[G^{\circ},G^{\circ}]$ is connected as $G^\circ$ and the brackets $[\cdot,\cdot]$ are continuous, we conclude that $[G^{\circ},G^{\circ}]=\{e_G\}$.
\end{remark}

\begin{proof}[Proof of \cref{Theo-B}]
Let $H$ be the Leibman group of $(X=G/\Gamma,\mu,T)$. First, we notice that if $[\tau,G]=\{e_G\}$ then $L^2(X)=L^2(\mathcal{J}([\tau,G],\Gamma)\backslash X)$ and the result follows from \cref{main-theorem}. Thus, we will assume that $[\tau,G]\neq \{e_G\}$. Second, by \cref{Spectrum-uniform-part}, $L^2([H,H]\backslash X)^\perp$ exhibits infinite Lebesgue spectrum. Hence, it is enough to prove the result in $L^2([H,H]\backslash X)$. Then, replacing $X$ by $[H,H]\backslash X$, we assume without loss of generality that $[H,H]=\{e_G\}$ and thus $H$ is an abelian.

Let $(\chi_j)_{j\in \N}$ be a set of characters in $\widehat{H}$ forming an orthonormal basis of $L^2(H/(H\cap \Gamma))$, such that $\chi_j(\Gamma\cap H)\equiv 1$. Denote by $F= \{0\}^{d-l} \times [0,1)^l$ the fundamental domain for the projection of $G^\circ/H^\circ$ onto $Y$. Using  \cref{relative-basis}  with the family of functions $\{t_j\}_{j\in \N}$ given by \cref{tj-explicit}, we obtain the decomposition
    $$L^2(X)= \bigoplus_{j\in \N} \overline{L^{\infty}(X,\mathcal{I}(T)) \phi_j  }, $$
    where for each $j\in \N$, $\phi_j$ has eigenvalue $\lambda_j(y)=  \ind{A_{i_j}}(y) e(t_{i_j}(y))$ where $A_{i_j}=\{y \in Y : \mu_y(\{e(t_{i_j}(y))\})>0 \}.$ For the sake of notation, let us denote $i_j=j$. Thus, it is enough to see that the spaces $E_j:=  \overline{ L^\infty(X,I(T))  \phi_j} $ have maximal spectral type absolutely continuous with respect to the Lebesgue measure on $\bS^1$, whenever $\phi_j$ is not $\mathcal{J}([\tau,G],\Gamma)$-invariant. Let $\psi \in L^\infty(X,I(T))$ and $j \in \N$ be such that $\psi\phi_j\not\equiv 0$ and $\phi_j$ are not $\mathcal{J}([\tau,G],\Gamma)$-invariant. We have 
$$\int_X (\overline{\psi\phi_j})\cdot T^n(\psi\phi_j) d\mu=\int_Y |\psi \phi_j|^2 \ind{A_j} e(t_j(y))^nd\mu_Y. $$
In this way, the spectral type of $f= \psi\phi_j$ is given by the pushforward of $|\psi \phi_j|^2\ind{A_j}\mu_Y  $  through the map $y\in Y \mapsto e(t_j(y))\in \bS^1$. We claim $|\psi \phi_j|^2 \ind{A_j}\not\equiv 0$. Indeed, otherwise $T(\psi\phi_j)= \psi \phi_j \ind{A_j} \lambda_j=0$ and thus $\psi\phi_j\equiv 0$ which is a contradiction. It follows that it is enough to prove that the measure $\nu$ defined by the Fourier coefficients
$$\hat{\nu}(n):=\int_Y e(t_j(y))^n d\mu_Y  $$
is absolutely continuous with respect to Lebesgue. Let $\eta_{H}:\R^{d-l}\to H^\circ, \text{ and }\eta_{G}: \R^d \to G^\circ$, be Mal'cev coordinates on $H^\circ$ and $G^\circ$ respectively. Since $\chi_j$ is a character in $H$, its restriction to $H^\circ$ must have the form
$$\chi_j|_{H^\circ}(h)= e_j( k_j^T \cdot \eta_H^{-1}(h) ) ,$$
for each $h\in H^\circ$, and for some $k_j\in \Z^{d-l}$. By definition of $t_j$, for $x\in [0,1)^l$ and $g=\eta_G(x)$ we have
$$e(t_j(y))= \chi_j(\tau) \chi_j([g^{-1},\tau] )= \chi_j(\tau) e( k_j^T \cdot \eta_H^{-1}([g^{-1},\tau] ))=\chi_j(\tau) e( k_j^T \cdot \eta_H^{-1}([\eta_G(x)^{-1},\tau] )). $$
Since the operation in $G$ is polynomial in $\R^d$, the function
$$x\in \R^d \to k_j^T \cdot \eta_H^{-1}([\eta_G(x)^{-1},\tau] ) $$
is a polynomial, which we denote by $p_j(x)$. Notice that $\chi_j$ is not $[\tau,G]$-invariant. Indeed, if for the sake of contradiction we assume that $\chi_j$ is $[\tau,G]$-invariant, then we see that $y\mapsto e(t_j(y))$ is constant. This implies that the function $y\mapsto \lambda_j(y)$ has value $0$ or $1$. By our definition of eigenfunction, we have that $\phi_j$ is $T$-invariant, and thus $H$-invariant. Since $[\tau,G]\subseteq H$, we have that $\mathcal{J}([\tau,G],\Gamma)\subseteq H$  as $H$ is a rational connected normal subgroup of $G^\circ$ containing $[\tau,G]$, and $\mathcal{J}([\tau,G],\Gamma)$ is the smallest with such a property. As $\phi_j$ is not $\mathcal{J}([\tau,G],\Gamma)$-invariant, this is a contradiction. Thus, $\chi_j$ is not $[\tau,G]$-invariant, which implies that $p_j$ is not constant. 
We have  
\begin{align*}
  \widehat{\nu}(n)&= \chi(\tau)^n\int_{[0,1)^l} \chi([\eta_G(x)^{-1},\tau] H\cap \Gamma)^n dm_{\R^d}(x)=\chi(\tau)^n \int_{[0,1)^l} e(p_j(x))^n dm_{\R^d}(x).   
\end{align*}
Thus, it is enough to show that the pushforward $p(m_{\R^d})$ is absolutely continuous with respect to the Lebesgue measure of $\R^d$. Since $p$ is not constant, then $\nabla p(x)\neq 0$ for $\mu$-a.e. $x\in \R^d$. By \cref{app-coarea-formula} we have that the push forward of $p_j^*(m_{\R^d})$ is absolutely continuous with respect to the Lebesgue measure, concluding that $ \nu \ll m_{\bS^1}$ and thus $\sigma_f\ll m_{\bS^1} $.

To see that the maximal spectral type is countable Lebesgue, it is enough to work in a factor of the system. Let $s\geq 2$ be the minimum such that the projection of $\mathcal{J}([\tau,G],\Gamma)$ in $G/G_{s}$ is non trivial. Reducing to the factor $G/G_s\Gamma$ of $X$, we can assume without loss of generality that $X=G/\Gamma$ is a $(s-1)$-step nilsystem such that $[\tau,g]\in Z(G)$ for each $g\in G$. Using \cref{PW} with $L=Z(G)$, we obtain the decomposition 
$$L^2(X)=\bigoplus_{\chi\in \widehat{J}}V_\chi, $$
    where $J= L/L\cap \Gamma$ and for $\chi\in \widehat{J}$ with $\chi(L\cap \Gamma)\equiv 1$, $V_\chi$ is defined as in \cref{def-V-chi}. 
    Since $\mathcal{J}([\tau,G],\Gamma)\subseteq Z(G)$ is nontrivial and connected, there are infinitely many $\chi\in \hat{J}$ such that $\chi$ is nontrivial in $\mathcal{J}([\tau,G],\Gamma)$. Take one of such $\chi$ and observe that $\chi([\tau,G])=\bS^1$ by connectedness of $[\tau,G]$ and continuity of $\chi$. Let $g\in G$ and let $f\in V_\chi$ attaining the maximal spectral type $\sigma$ on $V_\chi$. Let $g\in G$ and denote $\alpha:=  \chi([\tau,g])$. Similarly to the proof of \cref{Spectrum-uniform-part}, we have that 
    \begin{equation*}
       \int  \overline{T_gf} \cdot T_\tau^nT_gfd\mu= \int_{\bS^1} z^n d\sigma^\alpha(z),
    \end{equation*}
where we recall that $\sigma^\alpha$ denotes the measure $\sigma$ translated by $\alpha$. Thus, $\sigma^\alpha\ll \sigma$ for all $\alpha\in \chi([\tau,G])=\bS^1    $. By \cref{Lemma-measure-in-circle} we conclude that $\sigma=m_{\bS^1}$, concluding that in each subspace $V_\chi$ the maximal spectral type is the Lebesgue measure $m_{\bS^1}$, finishing the proof.  
\end{proof}

\small{
\bibliographystyle{abbrv}
\bibliography{refs}
}

\bigskip
\noindent
Felipe Hernández\\
\textsc{{\'E}cole Polytechnique F{\'e}d{\'e}rale de Lausanne} (EPFL)\par\nopagebreak
\noindent
\href{mailto:felipe.hernandezcastro@epfl.ch}
{\texttt{felipe.hernandezcastro@epfl.ch}}

\end{document}